\newtheorem{remark}{Remark}[section]
\newtheorem{lemma}{Lemma}[section]
\newtheorem{theorem}{Theorem}[section]
\begin{document}
	\title{\textbf{Limiting Spectral Distribution of High-dimensional Multivariate Kendall-$\tau$}}
	\author{Ruoyu Wu}
    \date{}
	\maketitle
	\footnote{{\it Key words and phrases}.  Random Matrices, Marčenko–Pastur Law, Multivariate Kendall-$\tau$}
	
	\vspace{-.6in}
	\begin{abstract}
The multivariate Kendall-$\tau$ statistic, denoted by $K_n$, plays a significant role in robust statistical analysis. This paper establishes the limiting properties of the empirical spectral distribution (ESD) of $K_n$. We demonstrate that the ESD of $\frac{1}{2}pK_n$ converges almost surely to the Marčenko--Pastur law with variance parameter $\frac{1}{2}$, analogous to the classical result for sample covariance matrices. Using Stieltjes transform techniques, we extend these results to the independent component model, deriving a fixed-point equation that characterizes the limiting spectral distribution of $\frac{1}{2}tr\Sigma K_n$. The theoretical findings are validated through comprehensive simulation studies.
	\end{abstract}
	
	\section{Introduction}
	\subsection{Background}
A well-unknown and significant result in Random Matrices Theory is about the spectral behavior of sample covariance matrices, which implies their eigenvalue distributions converge to the Marčenko–Pastur law under the hypothesis $p/n \to y \in (0,1)$.

In practical applications, heavy-tailed data and outliers can compromise the stability of moment-based estimators such as the sample covariance. Rank-based dependence measures, including Spearman’s $\rho$ and Kendall’s $\tau$, provide robust alternatives that do not rely on moment assumptions. Extending random matrix theory to these correlation-type matrices—particularly the multivariate Kendall–$\tau$ matrix—offers valuable insights into robust high-dimensional statistics.

For correlation matrices, Jiang \cite{Jiang2004} established asymptotic properties of eigenvalues for large correlation matrices. Rank-based correlation matrices have attracted increasing attention due to their robustness. Bai and Zhou \cite{BaiZhou2008} showed that the empirical spectral distribution of the Spearman correlation matrix follows the Marčenko–Pastur law. Bandeira, Lodhia, and Rigollet \cite{Bandeira2017} further proved that the sample Kendall–$\tau$ correlation matrix exhibits an eigenvalue distribution that is a linear transformation of the Marčenko–Pastur law. More recently, Wu and Wang \cite{WuWang2022} extended these findings to dependent samples and established convergence results under weak dependence.

The multivariate Kendall–$\tau$ statistic was first introduced by Choi and Marden \cite{ChoiMarden1998} on a generalization of Kendall’s $\tau$ for testing independence between multivariate samples. Han and Liu \cite{HanLiu2016} later utilized it to develop robust estimators for elliptical distributions. 

This thesis investigates the asymptotic spectral behavior of the sample multivariate Kendall–$\tau$ matrix $K_n$. It establishes that the empirical spectral distribution of 
$\frac{1}{2}pK_n$ converges to the Marčenko–Pastur law with variance parameter $\frac{1}{2}$. The analysis further extends to an independent-component model $X_i = \mu + \Sigma^{1/2} Z_i$
and derives the corresponding Stieltjes transform equation for the limiting distribution. Using moment and Stieltjes-transform methods, the study demonstrates a deep connection between the spectral behavior of multivariate Kendall–$\tau$ matrices and classical covariance-type matrices, advancing the theoretical foundation for robust high-dimensional inference.

\subsection{Notation and Preliminaries}
Let $X_i = (X_{i1}, X_{i2}, \ldots, X_{ip})^T$ $(i=1,2,\ldots,n)$ denote $n$ independent observations from a $p$-dimensional random vector $X = (X_1, X_2, \ldots, X_p)^T$.  
Throughout this chapter, we assume that the dimension $p$ grows with $n$ such that the ratio $p/n \to y \in (0,1)$ as $n \to \infty$.

We define the sample multivariate Kendall-$\tau$ matrix as
\begin{equation}
K_n = \frac{2}{n(n-1)} \sum_{1 \le i < j \le n}
\frac{(X_i - X_j)(X_i - X_j)^T}{\|X_i - X_j\|^2}.
\end{equation}

Let $\lambda_1, \lambda_2, \ldots, \lambda_p$ be the eigenvalues of $K_n$.  
The \emph{empirical spectral distribution} (ESD) of $K_n$ is defined by
\begin{equation}
F^{K_n}(x) = \frac{1}{p} \sum_{i=1}^{p} \mathbbm{1}(\lambda_i \le x),
\end{equation}
where $\mathbbm{1}(\cdot)$ is the indicator function.  
We are interested in the limiting distribution of $F^{K_n}$ as $n, p \to \infty$.

\medskip

\noindent
\textbf{Notation.}
For a matrix $A = (a_{ij})_{m \times n}$:
\begin{itemize}[itemsep=3pt, leftmargin=1.2cm]
  \item $\|A\| = \sup_{\|x\|_2 = 1} \|A x\|_2$ denotes the spectral norm;
  \item $\|A\|_F = (tr(A^T A))^{1/2}$ denotes the Frobenius norm;
  \item $trA$ denotes the trace of $A$;
  \item $A^T$ denotes the transpose of $A$;
  \item For two matrices $A$ and $B$, the notation $A \sim B$ means their empirical spectral distributions are asymptotically equivalent.
\end{itemize}

For two distribution functions $F$ and $G$, define the \emph{L\'evy distance} as
\begin{equation}
L(F, G) = \inf\{\varepsilon > 0: F(x-\varepsilon) - \varepsilon \le G(x)
\le F(x+\varepsilon) + \varepsilon, \ \forall x \in \mathbb{R}\}.
\end{equation}

For a Hermitian matrix $A$, the \emph{Stieltjes transform} of its ESD $F^A$ is defined as
\begin{equation}
m_A(z) = \int \frac{1}{\lambda - z}\, dF^A(\lambda)
= \frac{1}{p} \mathrm{tr}[(A - zI_p)^{-1}], \qquad z \in \mathbb{C}^+,
\end{equation}
where $\mathbb{C}^+ = \{z \in \mathbb{C} : \mathrm{Im}(z) > 0\}$.

The limiting spectral distribution (LSD) of $\{A_n\}$ is said to exist if $F^{A_n}(x)$ converges weakly to a nonrandom distribution $F(x)$ as $n,p \to \infty$.  
Its Stieltjes transform $m_F(z)$ satisfies $m_{A_n}(z) \to m_F(z)$ for all $z \in \mathbb{C}^+$.

\section{Main Results}
\begin{theorem}[The limiting spectral distribution in the i.i.d. case]\label{thm2.1}
Let $\{X_{ij}\}$ $(i=1,\ldots,p; j=1,\ldots,n)$ be independent and identically distributed random variables with mean $0$, variance $1$, and finite fourth moment.  
Define $X_i = (X_{1i}, X_{2i}, \ldots, X_{pi})^T$.  
If $p/n \to y \in (0,1)$, then the empirical spectral distribution of $\tfrac{1}{2}pK_n$ converges almost surely to the Marčenko–Pastur distribution with variance parameter $\tfrac{1}{2}$, whose density is
\begin{equation}
f_y(x) = 
\begin{cases}
\dfrac{1}{2\pi xy}\sqrt{(b-x)(x-a)}, & a \le x \le b, \\[1.2em]
0, & \text{otherwise,}
\end{cases}
\end{equation}
where $a = \tfrac{1}{2}(1 - \sqrt{y})^2$ and $b = \tfrac{1}{2}(1 + \sqrt{y})^2$.
\end{theorem}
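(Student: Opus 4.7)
The plan is to reduce the statement to the classical Marčenko--Pastur theorem for sample covariance matrices by exploiting the concentration of the denominators $\|X_i - X_j\|^2$ around their common mean $2p$. Write $Y_{ij} = X_i - X_j$ and $r_{ij} = \|Y_{ij}\|^2$. Heuristically $r_{ij}/(2p) \to 1$, so
$$\tfrac{p}{2} K_n \;=\; \frac{p}{n(n-1)}\sum_{i<j} \frac{Y_{ij}Y_{ij}^T}{r_{ij}} \;\approx\; \frac{1}{2n(n-1)}\sum_{i<j} Y_{ij}Y_{ij}^T \;=\; \frac{1}{2(n-1)}\sum_i (X_i - \bar X)(X_i - \bar X)^T,$$
the last equality following from the elementary identity $\sum_{i<j}(X_i-X_j)(X_i-X_j)^T = n\sum_i (X_i - \bar X)(X_i - \bar X)^T$. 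The right-hand side is $\tfrac12$ times the unbiased sample covariance, so the classical MP theorem (plus a routine rank-one mean correction) delivers the Marčenko--Pastur law with parameter $\sigma^2=1$ rescaled by $1/2$, whose support is exactly $[\tfrac{1}{2}(1-\sqrt y)^2, \tfrac{1}{2}(1+\sqrt y)^2]$ as claimed.

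Rigorously, I will carry out three steps. \textbf{(i)} Apply the standard MP truncation--centering--rescaling of the $X_{ij}$ at level $o(\sqrt n)$, justified by the finite fourth moment; this modifies the ESD of $K_n$ by $o(1)$ in L\'evy distance, and henceforth we may assume the entries are uniformly bounded. \textbf{(ii)} Under this reduction $r_{ij}$ is a sum of $p$ bounded i.i.d.\ variables with mean $2$, so Bernstein's inequality plus a union bound over the $\binom{n}{2}$ pairs, combined with Borel--Cantelli, yields
$$\eta_n \;:=\; \max_{1\le i<j\le n} \left|\frac{r_{ij}}{2p} - 1\right| \longrightarrow 0 \qquad \text{almost surely.}$$
\textbf{(iii)} Let $A_n = \tfrac{p}{2}K_n$, $B_n = \tfrac{1}{2n(n-1)}\sum_{i<j} Y_{ij}Y_{ij}^T$ and $\epsilon_{ij} = (2p-r_{ij})/r_{ij}$, so that $A_n - B_n = \tfrac{1}{2n(n-1)}\sum_{i<j} \epsilon_{ij} Y_{ij}Y_{ij}^T$ with $|\epsilon_{ij}| \le \eta_n/(1-\eta_n)$ eventually. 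A direct trace expansion yields
$$\|A_n - B_n\|_F^2 \;=\; \frac{1}{4n^2(n-1)^2}\sum_{i<j,\,i'<j'}\epsilon_{ij}\epsilon_{i'j'}(Y_{ij}^T Y_{i'j'})^2 \;\le\; \frac{\eta_n^2}{(1-\eta_n)^2}\cdot\frac{\|\sum_{i<j}Y_{ij}Y_{ij}^T\|_F^2}{4n^2(n-1)^2}.$$
Since $\sum_{i<j}Y_{ij}Y_{ij}^T = n(n-1)\tilde S_n$ with $\tilde S_n := \tfrac{1}{n-1}\sum_i (X_i-\bar X)(X_i-\bar X)^T$ (whose operator norm is a.s.\ bounded by Bai--Yin and whose trace is of order $p$), one obtains $\|\sum Y_{ij}Y_{ij}^T\|_F^2 = n^2(n-1)^2\,\mathrm{tr}(\tilde S_n^2) = O(n^4 p)$ almost surely. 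Therefore $\|A_n - B_n\|_F^2 / p \to 0$ a.s., and the Bai rank/Frobenius inequality $L^3(F^{A_n},F^{B_n}) \le \|A_n-B_n\|_F^2 / p$ transfers the almost-sure limit from $F^{B_n}$ to $F^{A_n}$.

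The main obstacle is the uniform concentration in step (ii): without the truncation of step (i), the finite fourth moment yields only polynomially small tails for $r_{ij}$, which is too weak for a union bound over $O(n^2)$ pairs combined with Borel--Cantelli. Executing the truncation rigorously --- verifying that the matrix obtained after replacing each $X_{ij}$ by its truncated, centered, rescaled version differs from $\tfrac{p}{2}K_n$ in L\'evy distance by $o(1)$ almost surely, while controlling both the numerators $Y_{ij}Y_{ij}^T$ and the denominators $r_{ij}$ simultaneously --- is the most delicate part of the argument. Once that is in place, the rank-one mean correction relating $B_n$ to the uncentered sample covariance and the final appeal to the classical MP law are standard.
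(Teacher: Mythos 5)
Your overall architecture is the same as the paper's: reduce $\tfrac{p}{2}K_n$ to the sample covariance $S_n=\tfrac{1}{n(n-1)}\sum_{i<j}(X_i-X_j)(X_i-X_j)^T$ by showing the denominators $\|X_i-X_j\|^2$ concentrate at $2p$, then quote the Mar\v{c}enko--Pastur law. Your step (iii), however, is a genuinely different (and clean) comparison device: you bound $\tfrac1p\|A_n-B_n\|_F^2$ directly and invoke the Hermitian L\'evy-distance inequality $L^3(F^{A_n},F^{B_n})\le\tfrac1p\operatorname{tr}\bigl((A_n-B_n)^2\bigr)$ (Bai--Silverstein Cor.~A.41 --- note this is the Hoffman--Wielandt-type bound, not the rank inequality), at the price of needing $\|\tilde S_n\|=O(1)$ a.s.\ from Bai--Yin. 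The paper instead applies the $L^4$ inequality to the rectangular factors $X_{(i,j)}$ and $\sqrt p\,Y_{(i,j)}$, which reduces matters to $\tfrac{1}{\binom n2\sqrt p}\sum_{i<j}\|X_i-X_j\|\to1$; since the paper obtains that average by sandwiching between $\min_{(i,j)}$ and $\max_{(i,j)}$ of $\tfrac1p\|X_i-X_j\|^2$, both proofs ultimately rest on the same uniform-over-pairs concentration, your $\eta_n\to0$. Your algebra in (iii) is correct and arguably tidier than the paper's trace expansion.

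The genuine gap is the justification of step (ii). Truncation at level $o(\sqrt n)$ leaves entries bounded by $\delta_n\sqrt n\to\infty$, not ``uniformly bounded,'' so Bernstein applied to $r_{ij}=\sum_k(X_{ki}-X_{kj})^2$ gives a tail exponent of order $\epsilon/\delta_n^2$; beating the union bound over $\binom n2$ pairs together with Borel--Cantelli forces $\delta_n^2\log n\to0$, and it is not automatic under a bare fourth moment that such a small $\delta_n$ is compatible with making the truncation negligible for $K_n$ itself. Worse, the effect of truncation on $K_n$ is not a cheap low-rank perturbation: a single truncated entry in column $i$ perturbs all $n-1$ rank-one summands $\frac{(X_i-X_j)(X_i-X_j)^T}{\|X_i-X_j\|^2}$ involving $i$ (both numerator and denominator), so the naive rank bound is $O(|B|n)$, not $o(p)$; one must instead argue that the bad pairs contribute a positive semidefinite matrix of trace $o(p)$ and split it into a low-rank part plus a small-norm part. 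The statement of (ii) is nonetheless true under the theorem's hypotheses, and the cleanest repair avoids Bernstein entirely: write $r_{ij}=\|X_i\|^2+\|X_j\|^2-2X_i^TX_j$ and apply a maximal-partial-sum estimate of the type in the paper's Lemma~\ref{lem2.2} with $(\alpha,\beta)=(1,1)$ to the $n$ diagonal terms (requiring $\mathbb{E}|X_{11}|^4<\infty$) and with $(\alpha,\beta)=(1,2)$ to the $\binom n2$ mean-zero cross terms (requiring $\mathbb{E}|X_{11}X_{12}|^{3}<\infty$, implied by the fourth moment). With $\eta_n\to0$ a.s.\ secured this way, the remainder of your argument closes.
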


\begin{remark}
Theorem~\ref{thm2.1} shows that after normalization by $\tfrac{1}{2}p$, the eigenvalue distribution of $K_n$ follows the same limiting law as that of the sample covariance matrix.  
This implies that the rank-based multivariate Kendall-$\tau$ statistic retains a spectral structure consistent with that of covariance-type estimators.
\end{remark}

\begin{theorem}[General model with independent components]\label{thm2.2}
Let $Z_i = (Z_{i1}, Z_{i2}, \ldots, Z_{ip})^T$ be random vectors whose components are i.i.d.\ with mean $0$, variance $\tfrac{1}{2}$, and finite fourth moment.  
Suppose $X_i = \mu + \Sigma^{1/2} Z_i$, where $\Sigma$ is a $p \times p$ Hermitian positive-definite matrix with uniformly bounded spectral norm.  
Let $H_p$ denote the empirical spectral distribution of $\tfrac{1}{2}\Sigma$, and assume $H_p \to H$ as $p \to \infty$.  
If $p/n \to y \in (0,1)$, then the empirical spectral distribution of $\tfrac{1}{2}tr\Sigma K_n$ converges almost surely to a nonrandom limit distribution $F$ whose Stieltjes transform $m_F(z)$ satisfies
\begin{equation}\label{eq:stieltjes_main}
m_F(z) = \int \frac{1}{\tau(1 - y - y z m_F(z)) - z}\, dH(\tau), \qquad z \in \mathbb{C}^+.
\end{equation}
\end{theorem}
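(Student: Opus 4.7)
The plan is to reduce Theorem~\ref{thm2.2} to Silverstein's equation for a generalized sample-covariance matrix by replacing the self-normalizing scalar $\|X_i-X_j\|^2$ with the deterministic quantity $\mathrm{tr}\,\Sigma$ in every summand of $K_n$. Writing $X_i-X_j=\Sigma^{1/2}(Z_i-Z_j)$ and setting $Y_{ij}:=Z_i-Z_j$, whose components are i.i.d.\ with mean $0$, variance $1$ and finite fourth moment, one obtains
\[
K_n \;=\; \frac{2}{n(n-1)}\sum_{1\le i<j\le n}\frac{\Sigma^{1/2}Y_{ij}Y_{ij}^{T}\Sigma^{1/2}}{Y_{ij}^{T}\Sigma\, Y_{ij}}.
\]
As in the proof of Theorem~\ref{thm2.1}, a standard truncation and recentering of the $Z_{ij}$'s perturbs the ESD only by $o(1)$ in L\'evy distance, so we may assume the entries are uniformly bounded.

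Because $\E[Y_{ij}^{T}\Sigma Y_{ij}]=\mathrm{tr}\,\Sigma$ and a Hanson--Wright / quadratic-form bound gives $\Var(Y_{ij}^{T}\Sigma Y_{ij})=O(\mathrm{tr}\,\Sigma^{2})=O(p\,\|\Sigma\|^{2})$, the relative error $|Y_{ij}^{T}\Sigma Y_{ij}/\mathrm{tr}\,\Sigma - 1|$ is $o(1)$ uniformly in $(i,j)$ almost surely. Replacing each denominator by $\mathrm{tr}\,\Sigma$ and passing to Stieltjes transforms yields the equivalence $\tfrac{1}{2}(\mathrm{tr}\,\Sigma)\,K_n \sim M_n$, where
\[
M_n \;:=\; \frac{1}{n(n-1)}\,\Sigma^{1/2}\!\Bigl(\sum_{i<j}Y_{ij}Y_{ij}^{T}\Bigr)\Sigma^{1/2}.
\]
The algebraic identity $\sum_{i<j}(Z_i-Z_j)(Z_i-Z_j)^{T}=n\sum_{i}Z_iZ_i^{T}-(\sum_iZ_i)(\sum_iZ_i)^{T}$ then decomposes $M_n$ as $\tfrac{1}{n-1}\Sigma^{1/2}(\sum_i Z_iZ_i^{T})\Sigma^{1/2}$ minus a rank-one correction; by the rank inequality the correction is negligible, and what remains is the generalized sample-covariance matrix $S^{\Sigma}_{n}:=\Sigma^{1/2}\bigl(\tfrac{1}{n-1}\sum_{i}Z_iZ_i^{T}\bigr)\Sigma^{1/2}$ whose ``population covariance'' is $\tfrac{1}{2}\Sigma$ (since $\Var(Z_{i1})=\tfrac{1}{2}$) and whose eigenvalue distribution therefore tends to $H$ by hypothesis.

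For this classical object, Silverstein's theorem under the finite fourth-moment assumption gives almost-sure weak convergence to a nonrandom $F$ whose Stieltjes transform satisfies exactly \eqref{eq:stieltjes_main}, which completes the proof. The genuine difficulty is the uniform denominator replacement: the $\binom{n}{2}$ quadratic forms $Y_{ij}^{T}\Sigma Y_{ij}$ are strongly dependent (pairs sharing an index are correlated), so a naive union bound is too crude. The cleanest remedy is to work at the Stieltjes-transform level and bound
\[
\bigl|m_{\frac{1}{2}(\mathrm{tr}\,\Sigma)K_n}(z)-m_{M_n}(z)\bigr|\;\le\; \frac{C(z)}{n(n-1)}\sum_{i<j}\Bigl|\frac{\mathrm{tr}\,\Sigma}{Y_{ij}^{T}\Sigma Y_{ij}}-1\Bigr|,
\]
and then combine Hanson--Wright concentration with the preliminary truncation to show the right-hand side vanishes almost surely. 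Once this uniform replacement is established, the rank-one reduction and the invocation of Silverstein's theorem are standard and yield \eqref{eq:stieltjes_main}.
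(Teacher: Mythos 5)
Your plan follows essentially the same route as the paper: reduce $\tfrac{1}{2}\mathrm{tr}\Sigma\,K_n$ to the generalized sample covariance matrix $\Sigma^{1/2}\bigl(\tfrac{1}{n}\sum_i Z_iZ_i^T\bigr)\Sigma^{1/2}$ by uniformly replacing each denominator $\|X_i-X_j\|^2=Y_{ij}^T\Sigma Y_{ij}$ with $\mathrm{tr}\,\Sigma$ (via truncation plus high-moment quadratic-form concentration and a union bound over the $\binom{n}{2}$ pairs, which is exactly the paper's Lemma~\ref{lem2.5} argument for $\Delta_2$), discard the rank-one centering correction, and land on Silverstein's fixed-point equation. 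The only cosmetic differences are that you cite Silverstein's theorem where the paper re-derives it by the Bai--Silverstein resolvent argument, and your aside that a union bound fails because of dependence between pairs is off the mark (union bounds do not need independence; the real obstacle, which your truncation step already addresses, is that fourth moments alone give only $O(1/p)$ tails per pair).
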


The proofs of Theorems~\ref{thm2.1} and~\ref{thm2.2} will be provided in the subsequent sections.  
They rely on a sequence of auxiliary lemmas that establish almost sure convergence of certain matrix traces and control the difference between $K_n$ and sample covariance-type matrices.

\section{Proofs}
\subsection{Auxiliary Lemmas}

In order to prove Theorems~\ref{thm2.1} and~\ref{thm2.2}, we require several auxiliary results.  
These lemmas establish almost sure convergence for pairwise summations and control of partial sums.

\begin{lemma}[Pairwise Convergence Lemma]\label{lem2.1}
Let $\{X_i\}_{i=1}^n$ be a sequence of i.i.d.\ random variables with $\mathbb{E}[X_i] = \mu$ and $\mathbb{E}[X_i^4] < \infty$.  
Then
\begin{equation}
\frac{1}{\binom{n}{2}} \sum_{1 \le i < j \le n} X_i X_j \xrightarrow{a.s.} \mu^2.
\end{equation}
\end{lemma}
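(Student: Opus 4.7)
The plan is to reduce this pairwise sum to linear sums via the standard algebraic identity
\[
\Bigl(\sum_{i=1}^n X_i\Bigr)^2 \;=\; \sum_{i=1}^n X_i^2 + 2\sum_{1\le i<j\le n} X_i X_j,
\]
which gives
\[
\frac{1}{\binom{n}{2}} \sum_{1\le i<j\le n} X_i X_j
= \frac{n}{n-1}\biggl(\frac{1}{n}\sum_{i=1}^n X_i\biggr)^{2} \;-\; \frac{1}{n-1}\cdot\frac{1}{n}\sum_{i=1}^n X_i^2.
\]
Once this identity is in hand, the lemma is an immediate consequence of the classical strong law of large numbers.

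First I would apply Kolmogorov's strong law to the sequence $\{X_i\}$, obtaining $\tfrac{1}{n}\sum X_i \to \mu$ almost surely, and hence (by the continuous mapping theorem applied to $t\mapsto t^2$) the first term on the right converges a.s.\ to $\mu^2$ since $\tfrac{n}{n-1}\to 1$. Second, because $\mathbb{E}[X_i^4]<\infty$ implies $\mathbb{E}[X_i^2]<\infty$, a second application of the strong law yields $\tfrac{1}{n}\sum X_i^2 \to \mathbb{E}[X_1^2]$ almost surely; multiplying by the prefactor $1/(n-1)\to 0$ shows the second term vanishes almost surely. Combining these two limits on the probability-one intersection event gives the claim.

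There is essentially no obstacle here: the only care required is that both applications of the SLLN hold simultaneously on a single event of full probability, which is automatic since countable intersections of probability-one events have probability one. I note that the finite fourth moment hypothesis is stronger than needed (finite second moment would suffice), but it is consistent with the hypotheses used throughout the subsequent lemmas, so no weakening is pursued. An alternative route would be to recognize $\tfrac{1}{\binom{n}{2}}\sum_{i<j} X_iX_j$ as a $U$-statistic of order two with kernel $h(x,y)=xy$ and invoke the Hoeffding strong law for $U$-statistics, but the elementary identity above is more transparent and self-contained.
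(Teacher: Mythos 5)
Your proof is correct and follows essentially the same route as the paper: both expand $2\sum_{i<j}X_iX_j = (\sum_i X_i)^2 - \sum_i X_i^2$ and then apply the strong law of large numbers to each linear average, with the second term killed by the extra factor $1/(n-1)$. Your added remarks (the full-probability intersection, the fact that a finite second moment would suffice, and the $U$-statistic alternative) are accurate but not needed.
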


\begin{proof}
Observe that
\[
\sum_{1 \le i < j \le n} X_i X_j
= \frac{1}{2} \left( \left(\sum_{i=1}^{n} X_i\right)^2 - \sum_{i=1}^{n} X_i^2 \right).
\]
By the strong law of large numbers (SLLN),
\[
\frac{1}{n} \sum_{i=1}^{n} X_i \xrightarrow{a.s.} \mu, \qquad
\frac{1}{n} \sum_{i=1}^{n} X_i^2 \xrightarrow{a.s.} \mathbb{E}[X^2].
\]
Dividing both sides by $\binom{n}{2}$ and noting that $\frac{n^2}{\binom{n}{2}} \to 2$, we have
\[
\frac{1}{\binom{n}{2}} \sum_{1 \le i < j \le n} X_i X_j
= \frac{1}{2} \left(\frac{n}{n-1}\right) \left[ \left( \frac{1}{n}\sum_{i=1}^{n} X_i \right)^2
- \frac{1}{n} \left( \frac{1}{n}\sum_{i=1}^{n} X_i^2 \right) \right] \xrightarrow{a.s.} \mu^2.
\]
\end{proof}

\begin{lemma}[Maximal Partial Sum Lemma]\label{lem2.2}
Let $\{X_{ij}\}$ be i.i.d.\ random variables.  
For constants $\alpha > \tfrac{1}{2}$, $\beta \ge 0$, and integer $M > 0$, we have
\begin{equation}
\max_{1 \le j \le M n^{\beta}}
\left| n^{-\alpha} \sum_{i=1}^{n} (X_{ij} - c) \right| \xrightarrow{a.s.} 0
\end{equation}
if and only if $\mathbb{E}|X_{11}|^{(1+\beta)/\alpha} < \infty$,  
where
\[
c = 
\begin{cases}
\text{any real number}, & \text{if } \alpha > 1, \\
\mathbb{E}[X_{11}], & \text{if } \alpha \leq 1.
\end{cases}
\]
\end{lemma}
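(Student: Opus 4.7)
The plan is to prove sufficiency by the classical truncation/centralization/tail-bound machinery and necessity by a converse Borel--Cantelli argument. Set $q = (1+\beta)/\alpha$ and $S_n^{(j)} = \sum_{i=1}^n X_{ij}$ throughout.

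For sufficiency, first truncate at level $n^\alpha$: let $Y_{ij}^{(n)} = X_{ij}\mathbbm{1}(|X_{ij}|\le n^\alpha)$. On dyadic blocks $n \in [2^k,2^{k+1})$, the probability that $X_{ij}\ne Y_{ij}^{(n)}$ for some $i\le n$, $j\le Mn^\beta$ is at most $C\,2^{k(1+\beta)}\P(|X_{11}|>2^{k\alpha})$; by Cauchy condensation this series in $k$ is summable precisely when $\E|X_{11}|^q < \infty$, so Borel--Cantelli makes the truncation cost-free almost surely. Next, centralize by $\mu_n = \E Y_{11}^{(n)}$: when $\alpha \le 1$ the shift $n|\mu_n - c|$ is $o(n^\alpha)$ via $|\mu_n - c|\le n^{\alpha(1-q)}\E|X_{11}|^q$, and when $\alpha > 1$ the freedom to choose any $c$ absorbs the shift since $n/n^\alpha \to 0$.

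The core estimate is to show $\max_{j\le Mn^\beta} n^{-\alpha}|S_n^{(j)} - n\mu_n| \to 0$ a.s.\ for the truncated array. These variables are bounded by $2n^\alpha$ and satisfy $\E|Y_{11}^{(n)}|^k \le n^{\alpha(k-q)}\E|X_{11}|^q$ for $k>q$. Rosenthal's inequality then yields
\[
\E\bigl|S_n^{(j)}-n\mu_n\bigr|^k \;\le\; C_k\bigl[(n\sigma_n^2)^{k/2}+n\,\E|Y_{11}^{(n)}|^k\bigr],
\]
with $\sigma_n^2 = \Var(Y_{11}^{(n)})$ bounded by either a constant or a multiple of $n^{\alpha(2-q)}$ according as $q\ge 2$ or $q<2$. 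Markov's inequality and a union bound over $j\le Mn^\beta$, with $k$ chosen large enough that $k(\alpha-\tfrac{1}{2}) > 1+\beta$, followed by restriction to the dyadic subsequence $n=2^r$ and filling the gaps by monotonicity of the truncated maxima, produce a summable tail series to which Borel--Cantelli applies.

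For necessity, if the displayed maximum converges to $0$ a.s., isolating a single entry gives $n^{-\alpha}|X_{nj_0}-c|\to 0$ for any fixed $j_0$, and the converse Borel--Cantelli lemma applied to the independent array forces $\sum_n \P(|X_{11}|>\varepsilon n^\alpha) < \infty$, hence $\E|X_{11}|^q<\infty$. The main obstacle is the tail estimate in the third paragraph: the large-jump term $n\,\E|Y_{11}^{(n)}|^k$ is precisely marginal under the sharp condition $\E|X_{11}|^q<\infty$, so a naive union bound over all $n$ fails outright, forcing the dyadic-subsequence detour (or, alternatively, a refined truncation at a slightly smaller level such as $n^\alpha(\log n)^{-\delta}$). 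Tracking exponents so that everything closes at the sharp moment $q=(1+\beta)/\alpha$ rather than at a strictly larger one is the real technical heart of the proof.
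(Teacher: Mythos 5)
Your sufficiency argument follows essentially the same route as the paper: truncation at level $n^{\alpha}$, centralization, a Rosenthal-type bound $\E|S_n^{(j)}-n\mu_n|^k \le C_k[(n\sigma_n^2)^{k/2}+n\E|Y_{11}^{(n)}|^k]$ with a high even moment, a union bound over the $Mn^{\beta}$ columns, dyadic blocking, and Borel--Cantelli. One point you gloss over: after restricting to $n=2^r$, ``filling the gaps by monotonicity'' still leaves you with $\max_{m\le 2^{r+1}}|S_m^{(j)}|$ inside the block, and a moment bound on the endpoint $S_{2^{r+1}}^{(j)}$ alone does not control that maximum. The paper closes this by applying Doob's submartingale inequality to $|\sum_{i\le m}Z_{ik}|^g$ before invoking the Rosenthal bound; you need that (or L\'evy/Ottaviani) explicitly, but it slots into your argument without changing anything else.

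The necessity direction, however, has a genuine gap. You isolate a \emph{single fixed} column $j_0$ and conclude $\sum_n \P(|X_{11}|>\varepsilon n^{\alpha})<\infty$. That series condition is equivalent to $\E|X_{11}|^{1/\alpha}<\infty$, not to $\E|X_{11}|^{(1+\beta)/\alpha}<\infty$; for $\beta>0$ the latter is strictly stronger, so your argument proves the wrong (weaker) moment bound. To get the full exponent you must keep the maximum over all $j\le Mn^{\beta}$: differencing the partial sums at $n$ and $n-1$ gives $\max_{j\le Mn^{\beta}} n^{-\alpha}|X_{nj}|\to 0$ a.s.; the events $A_n=\{\max_{j\le Mn^{\beta}}|X_{nj}|\ge \varepsilon n^{\alpha}\}$ are independent across $n$ (they involve disjoint rows), so the second Borel--Cantelli lemma forces $\sum_n \P(A_n)<\infty$, and expanding $\P(A_n)=1-(1-\P(|X_{11}|\ge\varepsilon n^{\alpha}))^{\lfloor Mn^{\beta}\rfloor}\asymp Mn^{\beta}\,\P(|X_{11}|\ge\varepsilon n^{\alpha})$ yields $\sum_n n^{\beta}\P(|X_{11}|\ge\varepsilon n^{\alpha})<\infty$, which is the series equivalent to $\E|X_{11}|^{(1+\beta)/\alpha}<\infty$. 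This is exactly how the paper argues (it also quotes the Marcinkiewicz SLLN for the case $\beta=0$, where your single-column reduction does suffice).
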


\begin{proof}

\textbf{Sufficiency:}  
Without loss of generality, let \( c = 0 \). For \( \epsilon > 0 \) and \( N \geq 1 \),

\[
P \left\{ \max_{j \leq M n^\beta} \left| n^{-\alpha} \sum_{i=1}^n (X_{ij} - c) \right| \geq \epsilon, \text{ i.o.} \right\} 
\]
\[
\leq \sum_{k \geq N} P \left\{ \max_{2^{k-1} < n \leq 2^k} \max_{j \leq M 2^{k\beta}} \left| \sum_{i=1}^n X_{ij} \right| \geq 2^{k\alpha} \epsilon' \right\}
\]
\[
\leq \sum_{k \geq N} M 2^{k\beta} P \left\{ \max_{2^{k-1} < n \leq 2^k} \left| \sum_{i=1}^n X_{i1} \right| \geq 2^{k\alpha} \epsilon' \right\}
\]

Here, \( \epsilon' = 2^{-\alpha} \epsilon \). By the Borel–Cantelli lemma, it suffices to prove:

\[
\sum_{k=1}^{\infty} 2^{k\beta} P \left\{ \max_{n \leq 2^k} \left| \sum_{i=1}^n X_{i1} \right| \geq 2^{k\alpha} \epsilon' \right\} < \infty
\]

Now, define \( Y_k = X_{i1} I\left( |X_{i1}| < 2^{k\alpha} \right) \) and \( Z_{ik} = Y_k - \mathbb{E}[Y_k] \). Then \( Z_{ik} \) is bounded by \( 2^{k\alpha+1} \) and \( \mathbb{E}[Z_{ik}] = 0 \), thus forming a martingale difference sequence. Let \( g > \frac{\beta + 2\alpha}{\alpha - 1/2} \) be a sufficiently large positive even integer. By the submartingale inequality:

\[
P \left\{ \max_{n \leq 2^k} \left| \sum_{i=1}^n Z_{ik} \right| \geq 2^{k\alpha} \epsilon' \right\} 
\leq C 2^{-kg\alpha} \mathbb{E} \left[ \left| \sum_{i=1}^{2^k} Z_{ik} \right|^g \right]
\]

So \[
P \left\{ \max_{n \leq 2^k} \left| \sum_{i=1}^n Z_{ik} \right| \geq 2^{k\alpha} \epsilon' \right\}
\leq C 2^{-kg\alpha} \left\{ 2^k \mathbb{E}[|Z_{ik}|^g] + 2^{kg/2} \left( \mathbb{E}[Z_{ik}^2] \right)^{g/2} \right\}
\]

Note that

\begin{align*}
\sum_{k=1}^{\infty} 2^{k\beta - kg\alpha + k} \mathbb{E}[Z_{ik}^g] &\leq C \sum_{k=1}^{\infty} 2^{k\beta - kg\alpha + k} \mathbb{E}[X_{i1}^g I(|X_{11}| < 2^{k\alpha})] \\
&\leq C \sum_{k=1}^{\infty} \mathbb{E}\left[|X_{11}|^{\frac{1+\beta}{\alpha}}\right] I(2^{\alpha l-1} < |X_{11}| < 2^{\alpha l}) + C_0 \\
&< \infty
\end{align*}

Similarly, we can prove:
\[
\sum_{k=1}^{\infty} 2^{k\beta - kg\alpha + k} (\mathbb{E}[Z_{ik}^2])^{g/2} < \infty
\]

Therefore, we have proved:
\[
\sum_{k=N}^{\infty} 2^{k\beta} P \left\{ \max_{n \leq 2^k} \left| \sum_{i=1}^n Z_{ik} \right| \geq 2^{k\alpha} \epsilon \right\} < \infty
\]

Now since \( Z_{ik} \) is just a standardized version of \( Y_{ik} \), we only need to estimate the bound of \( \mathbb{E}[Y_{ik}] \):
\[
\max_{n \leq 2^k} \left| \sum_{i=1}^n \mathbb{E}[Y_{ik}] \right| \leq 2^k |\mathbb{E}[Y_{1k}]| \leq 0.5 \epsilon 2^{k\alpha}
\]

Clearly, we also have:
\[
\sum_{k=N}^{\infty} 2^{k\beta} P \left\{ \max_{n \leq 2^k} \left| \sum_{i=1}^n Y_k \right| \geq 2^{k\alpha} \epsilon \right\} < \infty
\]

Finally, noting that \( \mathbb{E}[|X_{11}|^{\frac{1+\beta}{\alpha}}] < \infty \), we obtain:
\[
\sum_{k=1}^{\infty} 2^{k\beta} P \left\{ \bigcup_{i=1}^{2^k} {|X_{i1}|\geq 2^{k\alpha}}\right\}   \leq \sum_{k=1}^{\infty} 2^{k(\beta+1)} P \left\{ |X_{11}| \geq 2^{k\alpha}  \right\} < \infty
\]

This completes the proof of sufficiency.

\textbf{Necessity:}

If \( \beta = 0 \), it is the result of the Marcinkiewicz SLLN. If \( \beta > 0 \), we have the condition:
\[
\max_{j \leq M(n-1)^\beta} n^{-\alpha} \left| \sum_{i=1}^n X_{ij} \right| \overset{\text{a.s.}}{\to} 0
\]

Then we have:
\[
\max_{j \leq Mn^\beta} n^{-\alpha} |X_{j}| \overset{\text{a.s.}}{\to} 0
\]

By the Borel-Cantelli lemma, we have:
\[
\sum_{n=1}^{\infty} P\left( \max_{j \leq M n^\beta} |X_{j}| \geq n^\alpha \right) < \infty
\]

This expression is equivalent to:
\[
\prod_{n=1}^{\infty} P\left( \max_{j \leq M n^\beta} |X_{j}| < n^\alpha \right) = \prod_{n=1}^{\infty} \left(1 - P(|X_{11}| \geq n^\alpha)\right)^{\lfloor M n^\beta \rfloor} > 0
\]

Therefore, we have:
\[
\sum_{n=1}^{\infty} M n^\beta P(|X_{11}| \geq n^\alpha) < \infty
\]

Hence, \( \mathbb{E}|X_{11}|^{\frac{\beta+1}{\alpha}} < \infty \). This completes the proof of necessity.
\end{proof}

\subsection{Proofs of the Main Theorems}
\subsubsection{Proof of Theorem~\ref{thm2.1}}
To prove Theorem~\ref{thm2.1}, we consider demonstrating that a sample covariance matrix and a statistic related to the multivariate Kendall's tau matrix share similar limiting spectral distributions. 

According to the conditions in Theorem~\ref{thm2.1}, for independent random samples \(X_i\), where each random sample \(X_i\) consists of components that are independent and identically distributed \(p\)-dimensional vectors \(X_{i1}, X_{i2}, \cdots, X_{ip}\).

Thus, \(\left \{ X_{jk} \right \}_{j,k = 1, 2, \ldots}  \) actually forms a double sequence of independent and identically distributed real random variables with finite fourth moments. 

One may assume their mean is 0 and variance is \(\frac{1}{2}\). Later we will see that this variance does not affect the limiting spectral distribution of the statistic. According to the definition of the sample covariance matrix:
\begin{align*}
S_n 
&= \frac{1}{n-1} \sum_{i=1}^n (X_i - \bar{X})(X_i - \bar{X})^T\\
&= \frac{1}{n(n-1)} \sum_{1 \leq i < j \leq n} (X_i - X_j)(X_i - X_j)^T
\tag{2.1}
\end{align*}

Note that the following equivalent form is closely related to the multivariate Kendall's tau matrix, so in subsequent calculations we will directly start from this form. Define the straightened matrix \(X_{ij}\) with \(p \times \binom{n}{2}\) elements:
\[
X_{(i,j)} = (X_1 - X_2, X_1 - X_3, \cdots, X_2 - X_3, \cdots, X_{n-1} - X_n)
\]

That is: each column of this matrix sequentially maps the pair \((i, j)\) to \(X_i - X_j\), with all these pairs sorted in lexicographical order.

We examine the decomposition of the covariance matrix:

\[
S_n = \frac{1}{2\binom{n}{2}} \sum_{1 \leq i < j \leq n} (X_i - X_j)(X_i - X_j)^T = \frac{1}{2\binom{n}{2}} X_{(i,j)} X_{(i,j)}^T
\]

Similarly, we define \(Y_{(i,j)}\) as the normalized sample differences after the same lexicographical ordering:

\[
Y_{(i,j)} = \left( \frac{X_1 - X_2}{\|X_1 - X_2\|}, \frac{X_1 - X_3}{\|X_1 - X_3\|}, \cdots, \frac{X_2 - X_3}{\|X_2 - X_3\|}, \cdots, \frac{X_{n-1} - X_n}{\|X_{n-1} - X_n\|} \right)
\]

Thus we have:

\[
\frac{1}{2} pK_n = \frac{p/2}{\binom{n}{2}} \sum_{1 \leq i < j \leq n} \frac{(X_i - X_j)(X_i - X_j)^T}{\|X_i - X_j\|^2} = \frac{p/2}{\binom{n}{2}} Y_{(i,j)} Y_{(i,j)}^T\tag{2.2}
\]

We wish to prove that \(pK_n / 2\) and \(S_n\) have the same limiting spectral distribution. It suffices to prove that the Lévy distance between their empirical spectral distributions \(F^{S_n}\) and \(F^{pK_n / 2}\) tends to 0.

We cite the following lemma without proof \cite{Bai1999}.

\begin{lemma}[Estimation of \emph{L\'evy distance}]\label{lem2.3}
Suppose \(A\) and \(B\) are two \(p \times n\) complex matrices. Then:
\[
L^4(F^{AA^T}, F^{BB^T}) \leq \frac{2}{p^2} \text{tr}((A - B)(A - B)^T) \cdot \text{tr}(AA^T + BB^T)
\]
\end{lemma}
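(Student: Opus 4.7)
The plan is to bound the Lévy distance through a three-step chain: apply a Hoffman--Wielandt-type inequality for singular values to control singular-value differences by $\|A-B\|_F^2$, transfer this bound to the eigenvalues of $AA^T$ and $BB^T$ via Cauchy--Schwarz, and then invoke an elementary CDF-comparison lemma that controls the Lévy distance by the $\ell^1$ average of sorted eigenvalue gaps. Squaring the final estimate will produce the claimed $L^4$ inequality.

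For the first two steps, let $\sigma_1(A) \ge \cdots \ge \sigma_p(A) \ge 0$ denote the singular values of $A$, so that $\lambda_i(AA^T) = \sigma_i(A)^2$. The Hoffman--Wielandt inequality applied to the Hermitization of $A$ and $B$ (a $(p+n)\times(p+n)$ block matrix whose nonzero eigenvalues are $\pm\sigma_i(\cdot)$) yields $\sum_i (\sigma_i(A)-\sigma_i(B))^2 \le \text{tr}((A-B)(A-B)^T)$. Factoring $\lambda_i(AA^T)-\lambda_i(BB^T) = (\sigma_i(A)-\sigma_i(B))(\sigma_i(A)+\sigma_i(B))$ and applying Cauchy--Schwarz gives
\[
\sum_i \bigl|\lambda_i(AA^T)-\lambda_i(BB^T)\bigr| \le \Bigl(\sum_i(\sigma_i(A)-\sigma_i(B))^2\Bigr)^{1/2} \Bigl(\sum_i(\sigma_i(A)+\sigma_i(B))^2\Bigr)^{1/2},
\]
with the second factor bounded by $\sqrt{2\,\text{tr}(AA^T+BB^T)}$ via $(a+b)^2 \le 2(a^2+b^2)$.

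Next I would establish the comparison lemma: for two empirical CDFs $F$ and $G$ based on $p$ sorted points $\mu_1 \le \cdots \le \mu_p$ and $\nu_1 \le \cdots \le \nu_p$,
\[
L^2(F,G) \le \frac{1}{p}\sum_{i=1}^p |\mu_i - \nu_i|.
\]
The argument is by contradiction: if $L(F,G) > \varepsilon$, then without loss of generality $G(x) > F(x+\varepsilon) + \varepsilon$ for some $x$, which forces more than $p\varepsilon$ indices $i$ to satisfy $\nu_i \le x < x+\varepsilon < \mu_i$, so that $|\mu_i-\nu_i| > \varepsilon$ on those indices and $\sum_i |\mu_i-\nu_i| > p\varepsilon^2$. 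Applying this to the eigenvalue sequences of $AA^T$ and $BB^T$ and combining with the previous bound gives
\[
L^2(F^{AA^T}, F^{BB^T}) \le \frac{1}{p}\sqrt{2\,\text{tr}((A-B)(A-B)^T)\,\text{tr}(AA^T+BB^T)},
\]
and squaring completes the proof.

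The main technical obstacle is the singular-value form of the Hoffman--Wielandt inequality, since the classical statement applies to Hermitian matrices; I would handle it either via the Hermitization trick sketched above or by a direct appeal to Mirsky's theorem on singular-value perturbations. The other delicate piece is the pigeonhole bookkeeping in the CDF-comparison lemma, where one must be careful about strict versus non-strict inequalities at boundary points, but this is routine once the counting argument is set up correctly.
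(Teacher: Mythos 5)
Your proof is correct, and it is essentially the standard argument: the paper itself states Lemma~\ref{lem2.3} without proof, citing Bai (1999), and the proof there proceeds exactly as you do --- the singular-value (Mirsky/Hoffman--Wielandt) perturbation bound, the factorization $\lambda_i(AA^T)-\lambda_i(BB^T)=(\sigma_i(A)-\sigma_i(B))(\sigma_i(A)+\sigma_i(B))$ with Cauchy--Schwarz, and the pigeonhole estimate $L^{1+\alpha}(F,G)\le \frac{1}{p}\sum_i|\mu_i-\nu_i|^{\alpha}$ with $\alpha=1$. All the constants check out ($(a+b)^2\le 2(a^2+b^2)$ producing the factor $2$, and squaring $L^2\le \frac{1}{p}\sqrt{2\,\mathrm{tr}((A-B)(A-B)^T)\,\mathrm{tr}(AA^T+BB^T)}$ giving the stated $L^4$ bound), so nothing further is needed.
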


Therefore, to prove \(L(F^{S_n}, F^{pK_n / 2}) = L(F^{\frac{1}{2}X_{(i,j)}X_{(i,j)}^T}, F^{\frac{p/2}{\binom{n}{2}}Y_{(i,j)}Y_{(i,j)}^T}) \to 0\), it suffices to prove:
\[
\frac{1}{p} \text{tr}\left( \left( \sqrt{\frac{1}{2\binom{n}{2}}} X_{(i,j)} - \sqrt{\frac{p/2}{\binom{n}{2}}} Y_{(i,j)} \right) \left( \sqrt{\frac{1}{2\binom{n}{2}}} X_{(i,j)} - \sqrt{\frac{p/2}{\binom{n}{2}}} Y_{(i,j)} \right)^T \right) \triangleq \Delta_n \to 0\tag{2.3}
\]

Calculation shows:

\begin{align*}
\Delta_n &= \frac{1}{p} \text{tr}\left( \sqrt{\frac{1/2}{n\binom{n}{2}}}X_{(i,j)} - \sqrt{\frac{p/2}{n\binom{n}{2}}}Y_{(i,j)} \right) \left( \sqrt{\frac{1/2}{n\binom{n}{2}}}X_{(i,j)} - \sqrt{\frac{p/2}{n\binom{n}{2}}}Y_{(i,j)} \right)^{T} \\
&= \frac{1/2}{p\binom{n}{2}} \text{tr}\left( X_{(i,j)} - \sqrt{p}Y_{(i,j)} \right) \left( X_{(i,j)} - \sqrt{p}Y_{(i,j)} \right)^{T} \\
&= \frac{1/2}{p\binom{n}{2}} \left\| X_{(i,j)} - \sqrt{p}Y_{(i,j)} \right\|_{F}^{2} \\
&= \frac{1/2}{p\binom{n}{2}} \left[ \sum_{1\leq i<j\leq n} \sum_{k=1}^{p} (X_{ki}-X_{kj})^{2} - 2\sqrt{p} \sum_{1\leq i<j\leq n} \sum_{k=1}^{p} \frac{(X_{ki}-X_{kj})^{2}}{\|X_i-X_j\|} + p \sum_{1\leq i<j\leq n} \sum_{k=1}^{p} \frac{(X_{ki}-X_{kj})^{2}}{\|X_i-X_j\|^{2}} \right] \\
&= \frac{1/2}{p\binom{n}{2}} \left[ \sum_{1\leq i<j\leq n} \sum_{k=1}^{p} (X_{ki}-X_{kj})^{2} - 2\sqrt{p} \sum_{1\leq i<j\leq n} \|X_i-X_j\| + p\binom{n}{2} \right] \\
&= \frac{1/2}{p\binom{n}{2}} \left[ \sum_{1\leq i<j\leq n} \sum_{k=1}^{p} (n-1)X_{ki}^{2} - 2 \sum_{1\leq i<j\leq n} \sum_{k=1}^{p} X_{ki}X_{kj} - 2\sqrt{p} \sum_{1\leq i<j\leq n} \|X_i-X_j\| + p\binom{n}{2} \right] \\
&= \frac{1}{2} \left[ \frac{2}{np} \sum_{i} \sum_{k=1}^{p} X_{ki}^{2} - \frac{2}{p\binom{n}{2}} \sum_{1\leq i<j\leq n} \sum_{k=1}^{p} X_{ki}X_{kj} - \frac{2}{\binom{n}{2}\sqrt{p}} \sum_{1\leq i<j\leq n} \|X_i-X_j\| + 1 \right]
\end{align*}

According to the strong law of large numbers,

\[
\frac{1}{np} \sum_{i} \sum_{k=1}^{p} X_{ki}^{2} \overset{\text{a.s.}}{\rightarrow} \mathbb{E}[X_{ki}^{2}] = \frac{1}{2}
\]

According to the result of Lemma~\ref{lem2.1}, we have:

\[
\frac{1}{p\binom{n}{2}} \sum_{1\leq i<j\leq n} \sum_{k=1}^{p} X_{ki}X_{kj} \overset{\text{a.s.}}{\rightarrow} 0
\]

Finally,

\[
\min_{(i,j)} \sqrt{\frac{\sum_{k}(X_{ki}-X_{kj})^{2}}{p}} \leq \frac{1}{\binom{n}{2}\sqrt{p}} \sum_{1\leq i<j\leq n} \|X_i-X_j\| \leq \max_{(i,j)} \sqrt{\frac{\sum_{k}(X_{ki}-X_{kj})^{2}}{p}}
\]

and according to Lemma~\ref{lem2.2},
\[
\min_{(i,j)} \frac{\sum_{k}(X_{ki}-X_{kj})^{2}}{p} \overset{\text{a.s.}}{\rightarrow} \mathbb{E}[(X_{ki}-X_{kj})^{2}] = 1
\]

\[
\max_{(i,j)} \frac{\sum_{k}(X_{ki}-X_{kj})^{2}}{p} \overset{\text{a.s.}}{\rightarrow} \mathbb{E}[(X_{ki}-X_{kj})^{2}] = 1
\]

Thus we can easily see that

\[
\frac{1}{\binom{n}{2}\sqrt{p}} \sum_{1\leq i<j\leq n} \|X_i-X_j\| \overset{\text{a.s.}}{\rightarrow} 1
\]

In summary, we obtain the result of Equation (2.3):

\[
\Delta_n = \frac{1}{2} \left[ \frac{2}{np} \sum_{i=1}^n \sum_{k=1}^p X_{ki}^2 - \frac{2}{p\binom{n}{2}} \sum_{1 \leq i < j \leq n} \sum_{k=1}^p X_{ki}X_{kj} - \frac{2}{\binom{n}{2}\sqrt{p}} \sum_{1 \leq i < j \leq n} \|X_i - X_j\| + 1 \right] \overset{\text{a.s.}}{\to} 0
\]

This implies that \(\frac{1}{2} p K_n\) and \(S_n\) have the same limiting spectral distribution. According to the result of Marčenko and Pastur, \(S_n\) follows the Marčenko-Pastur law with variance parameter 0.5.

We also note that if we let \(X_i' = \sigma X_i\), then:

\begin{align*}
K_n' &= \frac{2}{n(n-1)} \sum_{1 \leq i < j \leq n} \frac{(X_i' - X_j')(X_i' - X_j')^T}{\|X_i' - X_j'\|_2^2} \\
&= \frac{2}{n(n-1)} \sum_{1 \leq i < j \leq n} \frac{\sigma^2(X_i - X_j)(X_i - X_j)^T}{\|\sigma(X_i - X_j)\|_2^2} \\
&= K_n
\end{align*}

Therefore, we can assert that regardless of how the variance of each component of \(X_i\) changes, the limiting spectral distribution of \(\frac{1}{2} p K_n\) remains unchanged and equals the limiting spectral distribution of the sample covariance matrix when the component variance of \(X_i\) is \(\frac{1}{2}\). Thus, when \(p/n \to y \in (0,1)\), let \(a = 0.5(1 - \sqrt{y})^2\), \(b = 0.5(1 + \sqrt{y})^2\), the limiting spectral distribution of \(\frac{1}{2} p K_n\) is a distribution function with probability density function on \([a,b]\):

\[
p_y(x) = \frac{1}{\pi x y} \sqrt{(b-x)(x-a)}
\]
with zero probability density outside this interval.
\bigskip

\subsubsection{Proof of Theorem~\ref{thm2.2}}

Consider the general model
\[
X_i = \mu + \Sigma^{1/2} Z_i,
\]
where $Z_i = (Z_{i1}, Z_{i2}, \ldots, Z_{ip})^T$ are independent with
$\mathbb{E}Z_{ij} = 0$, $\mathrm{Var}(Z_{ij}) = \tfrac{1}{2}$,
and $\mathbb{E}|Z_{ij}|^4 < \infty$.
We will derive the limiting spectral distribution of $\tfrac{1}{2}tr\Sigma K_n$.

Since \(\Sigma/2\) corresponds to the covariance matrix of \( X_i \), which is Hermitian, we can easily prove through spectral decomposition properties that \(\Sigma^{\frac{1}{2}}\) is also a Hermitian matrix. For convenience, we denote it as \( A \), assuming \( A = (a_{ij})_{p \times p} \). The conditions indicate that the spectral norm \( \|A\|^2 \) of \( A \) is also bounded.

According to Equations (2.1) and (2.2), both \( S_n \) and \( K_n \) are independent of the expectation of \( X_i \). Therefore, we may assume \( X_i = \Sigma^{\frac{1}{2}} Z_i \).

Next, we will prove that \( \Sigma^{\frac{1}{2}} K_n \) and \( S_n \) have the same limiting spectral distribution.

We state the following lemma \cite{HornJohnson1985}:

\begin{lemma}\label{lem2.4}
Suppose \( A \) and \( B \) are two \( p \times n \) Hermitian matrices. Then:
\[
L(F^A, F^B) \leq \|A - B\|
\]
\end{lemma}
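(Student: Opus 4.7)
The plan is to reduce the claim about Lévy distance of the two empirical spectral distributions to a pointwise eigenvalue perturbation bound, and for that I will invoke Weyl's inequality for Hermitian matrices. Note first that the statement should be understood with $A,B$ both $p\times p$ Hermitian (the $p\times n$ in the statement is a typo inherited from the previous lemma); with that convention, both matrices have real eigenvalues.

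Let $\lambda_1 \ge \lambda_2 \ge \cdots \ge \lambda_p$ and $\mu_1 \ge \mu_2 \ge \cdots \ge \mu_p$ be the eigenvalues of $A$ and $B$ respectively, listed in decreasing order with multiplicity. The key input is Weyl's perturbation inequality, which states that for Hermitian $A,B$,
\[
\max_{1 \le i \le p} |\lambda_i - \mu_i| \le \|A - B\|.
\]
This is exactly the ingredient that converts a spectral-norm bound on $A-B$ into a uniform pairwise closeness of the sorted eigenvalues.

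Next I translate this into a bound on the distribution functions. Writing $\varepsilon := \|A-B\|$, I observe that for any $x \in \mathbb{R}$,
\[
F^A(x-\varepsilon) = \frac{1}{p}\#\{i : \lambda_i \le x-\varepsilon\} \le \frac{1}{p}\#\{i : \mu_i \le x\} = F^B(x),
\]
since $\lambda_i \le x-\varepsilon$ together with $|\lambda_i - \mu_i| \le \varepsilon$ forces $\mu_i \le x$. Symmetrically, $F^B(x) \le F^A(x+\varepsilon)$. Combining the two,
\[
F^A(x-\varepsilon) - \varepsilon \;\le\; F^A(x-\varepsilon) \;\le\; F^B(x) \;\le\; F^A(x+\varepsilon) \;\le\; F^A(x+\varepsilon) + \varepsilon,
\]
so $\varepsilon = \|A-B\|$ is admissible in the definition of the Lévy distance, yielding $L(F^A,F^B) \le \|A-B\|$.

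The argument is essentially a direct combination of Weyl's inequality with the definition of $L$, so there is no real obstacle; the only point requiring a little care is the pairing of the eigenvalues in decreasing order, which is precisely what makes Weyl's bound apply uniformly across indices. In particular, no issues of multiplicity or degeneracy arise because the bound $|\lambda_i-\mu_i|\le \|A-B\|$ is index-wise and survives repeated eigenvalues without modification.
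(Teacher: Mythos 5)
The paper states this lemma without proof, simply citing Horn and Johnson, so there is no in-paper argument to compare against; your proof via Weyl's perturbation inequality combined with the definition of the L\'evy distance is the standard argument and is correct, including the counting step showing $\{i:\lambda_i\le x-\varepsilon\}\subseteq\{i:\mu_i\le x\}$ for decreasingly ordered eigenvalues. You are also right that the ``$p\times n$'' in the statement is a slip: the lemma only makes sense for $p\times p$ Hermitian matrices, which is how it is applied in the paper.
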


Indeed, by Lemma~\ref{lem2.4},
\begin{align*}
L(F^{\frac{1}{2} tr\Sigma K_n}, F^{S_n}) 
&\leq \left\| \frac{1}{2}tr\Sigma K_n - S_n \right\|\\
&= \left\| \frac{1}{2\binom{n}{2}} X_{(i,j)} \text{diag}_{(i,j)} \left( \frac{tr\Sigma}{\|X_i - X_j\|^2} \right) X_{(i,j)}^T - \frac{1}{2\binom{n}{2}} X_{(i,j)} X_{(i,j)}^T \right\|\\
&\leq \frac{1}{2\binom{n}{2}} \left\| X_{(i,j)} X_{(i,j)}^T \right\| \cdot \left\| \text{diag}_{(i,j)} \left( \frac{tr\Sigma}{\|X_i - X_j\|^2} \right) - \text{diag}(I_{\binom{n}{2}\times\binom{n}{2}} )\right\|\\
& \leq \|S_n\| \cdot \left\| \text{diag}_{(i,j)} \left( \frac{tr\Sigma}{\|X_i - X_j\|^2} \right) -  \text{diag}(I_{\binom{n}{2}\times\binom{n}{2}} ) \right\|   
\end{align*}
Here, each entry of \( \text{diag}_{(i,j)} (f) \) is sequentially \( f(X_i, X_j) \), ordered lexicographically for pairs \( (i, j) \) with \( 1 \leq i < j \leq n \). Meanwhile,
\begin{align*}
\|S_n\| 
&= \left\| \frac{1}{2\binom{n}{2}}\sum_{1 \leq i < j \leq n} (X_i - X_j)(X_i - X_j)^T \right\|\\
&= \left\| \frac{1}{2\binom{n}{2}} \sum_{1 \leq i < j \leq n} A(Z_i - Z_j)(Z_i - Z_j)^T A^T \right\|\\
&\leq \left\| \frac{1}{2\binom{n}{2}} \sum_{1 \leq i < j \leq n} (Z_i - Z_j)(Z_i - Z_j)^T \right\| \cdot \|AA^T\|
\end{align*}
Since \( \|AA^T\| \) is bounded, and after truncating \( Z_i \) with respect to constants, we have \( \|S_n\| = O(1) \). Therefore, we only need to prove:

\[
\left\| \text{diag}_{(i,j)} \left( \frac{tr\Sigma}{\|X_i - X_j\|^2} \right) -  \text{diag}(I_{\binom{n}{2}\times\binom{n}{2}} ) \right\| \overset{\text{a.s.}}{\to} 0 \tag{2.4}
\]

Since \( |a^{-1} - b^{-1}| = \frac{|a-b|}{|ab|} \), under the truncated condition, Equation (2.4) is equivalent to:

\[
\left\| \text{diag}_{(i,j)} \left( \frac{\|X_i - X_j\|^2}{tr\Sigma} \right) -  \text{diag}(I_{\binom{n}{2}\times\binom{n}{2}} ) \right\| \overset{\text{a.s.}}{\to} 0 \tag{2.5}
\]

We first consider truncating and standardizing all samples, which does not affect the asymptotic properties of the statistics.

\medskip
\noindent
\textbf{Step 1. Truncation}

Let:
\[
\tilde{Z}_{ki} = Z_{ki} I(|Z_{ki}| \leq \delta_p' (np)^{\frac{1}{4}}) = Z_{ki} I(|Z_{ki}| \leq 0.5\delta_p p^{\frac{1}{2}})
\]

Here we require:
\[
\lim_{p \to \infty} \delta_p = 0, \quad \lim_{p \to \infty} \delta_p^{-4} \mathbb{E}[|Z_{11}|^4 I(|Z_{11}| > 0.5\delta_p p^{\frac{1}{2}})] = 0, \quad \delta_p p^{\frac{1}{2}} \to \infty
\]

Note that \( p/n \to y \in (0,1) \). Using Chen's  techniques (Chen and Pan,2012, page 1409) we can easily see:
\[
P\left( \limsup_{p \to \infty} |Z \neq \tilde{Z}| \right) = 0
\]

For \( \tilde{X}_i = A \tilde{Z}_i \), this directly shows:
\[
\left\| \text{diag}_{(i,j)} \left( \frac{\|X_i - X_j\|^2}{\text{tr} \Sigma} \right) - \text{diag}_{(i,j)} \left( \frac{\|\tilde{X}_i - \tilde{X}_j\|^2}{\text{tr} \Sigma} \right) \right\| \overset{\text{a.s.}}{\to} 0
\]

This truncation method effectively achieves:
\[
|\tilde{Z}_{ki} - Z_{ki}| \leq \delta_p p^{\frac{1}{2}}
\]

\medskip
\noindent
\textbf{Step 2. Normalization}

Let:
\[
\bar{Z}_{ki} = \frac{\tilde{Z}_{ki} - \mathbb{E}[\tilde{Z}_{ki}]}{\sqrt{2\text{Var}[\tilde{Z}_{ki}]}}
\]

From the construction in Step 1, we can easily see: \( \mathbb{E}[\tilde{Z}_{11}] \to 0 \) and \( \text{Var}[\tilde{Z}_{11}] \to \frac{1}{2} \). Furthermore, since \( Z_{11} \) has finite fourth moments,
\begin{align*}
|\mathbb{E}[\tilde{Z}_{11}]|
&= |\mathbb{E}[Z_{11}] - \mathbb{E}[\tilde{Z}_{11}]| \\
&= |\mathbb{E}[Z_{11}I(|Z_{11}| > 0.5\delta_p p^{\frac{1}{2}})]| \\
&\leq \mathbb{E}\left[ |Z_{11}| \left| \frac{Z_{11}}{0.5\delta_p p^{\frac{1}{2}}} \right|^3 I(|Z_{11}| > 0.5\delta_p p^{\frac{1}{2}}) \right]\\
&= o(p^{-\frac{3}{2}})
\end{align*}

And 
\[ |\text{Var}[\tilde{Z}_{11}] - \tfrac{1}{2}| = |\mathbb{E}[Z_{11}^{2}I(|Z_{11}| > 0.5\delta_p p^{\frac{1}{2}})] + o(p^{-3})| = o(p^{-1}) \]

Through simple calculations we can obtain :
\[ \left\| \text{diag}_{(i,j)} \left( \frac{\|X_i - X_j\|^2}{\text{tr}\Sigma} \right) - \text{diag}_{(i,j)} \left( \frac{\|\tilde{X}_i - \tilde{X}_j\|^2}{\text{tr}\Sigma} \right) \right\| \overset{\text{a.s.}}{\to } 0\]

According to the triangle inequality of norms, in the subsequent proof, we only need to consider Equation (2.5) with the truncated and standardized $\tilde{Z}_i$ replacing $Z_i$. For notational convenience, we still use $Z_i$ to denote the truncated sample. 

After truncation and standardization, we have:
\[ \mathbb{E}[Z_{11}] = 0, \quad \mathbb{E}[Z_{11}^2] = \tfrac{1}{2}, \quad \mathbb{E}[Z_{11}^4] < \infty, \quad |Z_{11} - \tilde{Z}_{11}| \leq \delta_p p^{\frac{1}{2}} \]

Note that $\text{diag}_{(i,j)} \left( \frac{\|X_i - X_j\|^2}{\text{tr}\Sigma} \right) -  \text{diag}(I_{\binom{n}{2}\times\binom{n}{2}} )$ is a diagonal matrix, and its spectral norm is the maximum eigenvalue; since the eigenvalues of a diagonal matrix are its diagonal elements, we only need to consider the almost sure convergence of the maximum diagonal element. Equation (2.5) is equivalent to:
\[ \max_{(i,j)} \left| \frac{\|X_i - X_j\|^2}{\text{tr}\Sigma} - 1 \right| \overset{\text{a.s.}}{\to} 0 \tag{2.6} \]

Now we have:
\begin{align*}
&\quad \max_{(i,j)} \left| \frac{\|X_i - X_j\|^2}{\text{tr}\Sigma} - 1 \right| \\
&= \max_{(i,j)} \left| \frac{\|A(Z_i - Z_j)\|^2}{\text{tr}\Sigma} - 1 \right| \\
& = \max_{(i,j)} \left| \frac{\sum_{k=1}^p \left[ \sum_{l=1}^p a_{kl}(Z_{li} - Z_{lj}) \right]^2}{\sum_{k=1}^p \sum_{l=1}^p a_{kl}^2} - 1 \right|\\
&= \max_{(i,j)} \left| \frac{\sum_{k=1}^p \sum_{l=1}^p a_{kl}^2 (Z_{li} - Z_{lj})^2 + \sum_{k=1}^p \sum_{l=1}^p \sum_{m \neq l} a_{kl} a_{km} (Z_{li} - Z_{lj})(Z_{mi} - Z_{mj})}{\sum_{k=1}^p \sum_{l=1}^p a_{kl}^2} - 1 \right| \\
&\leq \Delta_1 + 2\Delta_2 
\end{align*}

where,
\[ \Delta_1 = \max_{(i,j)} \left| \frac{\sum_{k=1}^p \sum_{l=1}^p a_{kl}^2 (Z_{li} - Z_{lj})^2}{\sum_{k=1}^p \sum_{l=1}^p a_{kl}^2} - 1 \right| \]
\[ \Delta_2 = \max_{(i,j)} \left| \frac{\sum_{k=1}^p \sum_{m\neq l}^{p} a_{kl} a_{km} (Z_{li} - Z_{lj})(Z_{mi} - Z_{mj})}{\sum_{k=1}^p \sum_{l=1}^p a_{kl}^2} \right| \]

We only need to prove $\Delta_1 \overset{\text{a.s.}}{\to} 0$ and $\Delta_2 \overset{\text{a.s.}}{\to} 0$.

\medskip
\noindent
\textbf{Step 3. Convergence of $\Delta_1$ and $\Delta_2$}

Since \( c^{-1} \leq \sum_{k=1}^{p} a_{ik}^2 \leq \|A\|^2 \leq c \), without loss of generality, we can assume \(\sum_{i=1}^{p} a_{ik}^2 = 1\); otherwise, we simply divide all elements of the corresponding column of \(A\) by this quantity. Then:

\[
\Delta_1 = \max_{(i,j)} \left| \frac{\sum_{k=1}^{p} (Z_{ki} - Z_{kj})^2}{p} - 1 \right|
\]

By Lemma~\ref{lem2.2}, we easily obtain \(\Delta_1 \overset{\text{a.s.}}{\to} 0\).

To prove \(\Delta_2 \overset{\text{a.s.}}{\to} 0\), we first cite the following Lemma \cite{ErdosYau2017} :

\begin{lemma}\label{lem2.5}
Suppose \(X_1, X_2, \cdots, X_N\) are independent centered random variables, and for fixed constants \(u_q\) we have:
\[
(\mathbb{E}[|X_i|^q])^{\frac{1}{q}} \leq u_q, \quad 1 \leq i \leq N, \quad q = 2, 3, \cdots
\]
Then for any double sequence of complex numbers \(\{a_{ij}\}_{i,j=1}^N\),
\[
\left( \mathbb{E}\left[ \left| \sum_{i \neq j = 1}^N a_{ij} X_i X_j \right|^q \right] \right)^{\frac{1}{q}} \leq C q u_q \left( \sum_{i \neq j = 1}^N |a_{ij}|^2 \right)
\]
where \(C\) is a constant independent of \(q\).
\end{lemma}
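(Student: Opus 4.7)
The plan is to prove the bound via the classical decoupling-plus-linear-moment strategy of de la Peña--Giné, which is the approach systematized in \cite{ErdosYau2017}. The target is a bound for an off-diagonal quadratic form in independent variables, so the natural idea is to reduce it twice to the much better understood Marcinkiewicz--Zygmund/Rosenthal moment inequality for sums of independent centered variables.

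First, I would decouple. Let $X_1',\ldots,X_N'$ be an independent copy of $X_1,\ldots,X_N$. The de la Peña--Montgomery-Smith decoupling inequality for off-diagonal quadratic forms gives
\[
\mathbb{E}\Bigl|\sum_{i\neq j} a_{ij}X_iX_j\Bigr|^q \le C_0^{\,q}\,\mathbb{E}\Bigl|\sum_{i\neq j} a_{ij}X_iX_j'\Bigr|^q
\]
for an absolute constant $C_0$. The payoff is that the decoupled sum $T := \sum_{i\neq j} a_{ij}X_iX_j' = \sum_i X_i\,b_i$, with $b_i := \sum_{j\neq i} a_{ij}X_j'$, is \emph{conditionally linear}: given $X'$, it is a sum of independent centered variables with $X'$-measurable coefficients.

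Second, I would iterate a linear moment bound. Conditioning on $X'$ and applying the Marcinkiewicz--Zygmund/Rosenthal inequality to the centered independent sum $\sum_i b_i X_i$ yields
\[
\bigl(\mathbb{E}_X|T|^q\bigr)^{1/q} \le C\sqrt{q}\,u_q\,\Bigl(\sum_i |b_i|^2\Bigr)^{1/2}.
\]
Taking $L^q$ in $X'$ and using Minkowski's inequality reduces the problem to bounding $\bigl\|(\sum_i|b_i|^2)^{1/2}\bigr\|_q$. For each fixed $i$ the sum $b_i = \sum_{j\neq i} a_{ij}X_j'$ is itself a centered linear form, so applying the same linear moment bound once more, then summing in $i$, produces another factor of $\sqrt{q}\,u_q$ together with the coefficient norm $(\sum_{i\neq j}|a_{ij}|^2)^{1/2}$. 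Combining the two applications delivers the announced linear-in-$q$ constant.

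The main obstacle is bookkeeping the $q$-dependence cleanly: each use of the linear moment inequality only contributes a factor $\sqrt{q}$, and a careless interchange of the two $L^q$ norms over $X$ and $X'$ can easily inflate the exponent, while a sharp use of Minkowski preserves the linear growth $Cq$. A secondary issue is restoring the diagonal restriction $i\neq j$ after decoupling, which is handled by adding and subtracting the diagonal terms and invoking $\mathbb{E}X_i=0$, and is absorbed into the universal constant. Since the lemma is only invoked as a standard tool from \cite{ErdosYau2017}, this sketch is sufficient and we refer the reader there for the full technical details.
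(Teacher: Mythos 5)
The paper does not actually prove this lemma --- it is quoted verbatim as a cited result from Erd\H{o}s and Yau, so there is no in-paper argument to compare against; your decoupling-plus-iterated-linear-moment sketch supplies the standard proof and is essentially sound. Decoupling reduces the off-diagonal quadratic form to a conditionally linear one, and two applications of the Marcinkiewicz--Zygmund/Rosenthal bound (one conditionally on $X'$, one for each $b_i$) each contribute a factor $C\sqrt{q}\,u_q$, which is where the linear growth in $q$ comes from; the interchange of norms you worry about is handled exactly as you say, by taking the conditional $L^q$ bound first and then Minkowski in $\ell^2$. Two points deserve attention. First, the bound your argument actually produces is
\[
\Bigl(\mathbb{E}\Bigl|\sum_{i\neq j}a_{ij}X_iX_j\Bigr|^q\Bigr)^{1/q}\;\le\; C\,q\,u_q^{2}\Bigl(\sum_{i\neq j}|a_{ij}|^2\Bigr)^{1/2},
\]
with $u_q$ squared and the coefficient sum under a square root. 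This is the form in Erd\H{o}s--Yau and, notably, the form the paper actually uses downstream in its estimate (2.7), where $u_q^{2q}=(\delta_p^2 p)^{q-2}$ and $\bigl(\sum_{k_1\neq k_2}a_{lk_1}^2a_{lk_2}^2\bigr)^{1/2}\le\|A\|^2$; the statement as printed ($u_q$ to the first power, no square root on the coefficient sum) is a misprint that your proof silently ``corrects'' rather than flags, and you should say explicitly which inequality you are establishing. Second, your closing remark about restoring the diagonal after decoupling is unnecessary: the original form is already off-diagonal, and the decoupled form $\sum_{i\neq j}a_{ij}X_iX_j'$ is conditionally linear exactly as it stands, so no diagonal correction ever arises. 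Neither point undermines the argument.
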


Define \(Y_{(i,j),l} = \sum_{k_1 \neq k_2} a_{lk_1} a_{lk_2} (Z_{k_1 i} - Z_{k_1 j}) (Z_{k_2 i} - Z_{k_2 j})\)

Then \(\Delta_2 = \max_{(i,j)} \frac{1}{p}\left| \sum_{l=1}^{p}Y_{(i,j),l} \right|\). 
It is easy to see that \(\mathbb{E}[Y_{(i,j)},l] = 0\).

According to the definition of truncation:
\[
\left( \mathbb{E}[|Z_{ki} - Z_{kj}|^q] \right)^{\frac{1}{q}} \leq \left( \mathbb{E}[|Z_{ki} - Z_{kj}|^2] (\delta_p p^{\frac{1}{2}})^{q-2} \right)^{\frac{1}{q}} = (\delta_p p^{\frac{1}{2}})^{1 - \frac{2}{q}}
\]

Since
\(
\sum_{k_1 \neq k_2} a_{lk_1}^2 a_{lk_2}^2 \leq \left( \sum_{k=1}^{p} a_{lk}^2 \right)^2 \leq \|A\|^4
\)

By Lemma~\ref{lem2.5}, we have:
\[
\mathbb{E}[|Y_{(i,j),l}|^q] \leq C^q q^q (\delta_p^2 p)^{q-2} \tag{2.7}
\]
where \(C\) is a constant independent of \(q\).

For a sequence \(\{\delta_p\}\) satisfying \(\delta=\delta_p \to 0\) and \(\delta_p^2 p^{\frac{1}{2}} \to \infty\), we construct a sequence \(\{h_p\}\) such that \(h = h_p \to 0\) such that:
\[
\frac{h}{\log \binom{n}{2}} \to 0, \quad \frac{h^2 \delta^2}{\log p\delta^4} \to \infty, \quad \frac{h}{\log p\delta^4} > 1 \tag{2.8}
\]

At this point, we have:

\begin{align*}
P_0 &= P\left( \max_{(i,j)} |\frac{1}{p} \sum_{l=1}^{p}\sum_{k_1 \neq k_2} a_{lk_1}a_{lk_2}(Z_{k_1 i} - Z_{k_1 j}) (Z_{k_2 i} - Z_{k_2 j}) |> \epsilon \right)\\
&= P\left( \max_{(i,j)} |\frac{1}{p}\sum_{l=1}^{p} Y_{(i,j),l}| > \epsilon \right)\\
&\leq \sum_{(i,j)} P\left( \frac{1}{p}\sum_{l=1}^{p} Y_{(i,j),l} > \epsilon \right)\\
&\leq \sum_{(i,j)} \epsilon^{-h} p^{-h} \mathbb{E}[|\sum_{l=1}^{p}Y_{(i,j)},l|^h]\\
&\leq \sum_{(i,j)} \epsilon^{-h} p^{-h} \sum_{m=1}^{\lfloor h/2 \rfloor} \sum_{1 \leq l_1 < \cdots < l_m \leq p} \sum_{i_1+ \ldots+ i_m=h} \left( \binom{h}{i_1, \ldots, i_m} \mathbb{E}[|Y_{(i,j),l_1}|^{i_1}] \cdots \mathbb{E}[|Y_{(i,j),l_m}|^{i_m}] \right)\\
&\leq \binom{n}{2} \epsilon^{-h} p^{-h} \sum_{m=1}^{\lfloor h/2 \rfloor} p^m \sum_{i_1+ \ldots+ i_m=h}   \binom{h}{i_1, \ldots, i_m} C^h h^h (\delta^2 p)^{h-2m} \\
&\leq \binom{n}{2} \epsilon^{-h} (Ch\delta^2)^h \sum_{m=1}^{\lfloor h/2 \rfloor} m^h (p\delta^4)^{-m}\\
&\leq \binom{n}{2} \epsilon^{-h} (Ch\delta^2)^h \frac{h}{2} (\frac{h}{\log p\delta^4})^h\\
&= ((\frac{\binom{n}{2}h}{2})^{\frac{1}{h}} \frac{Ch^2 \delta^2}{\epsilon \log p\delta^4})^h\\
&= o(p^{-s}) \tag{2.9}
\end{align*}

where \( s \) is an arbitrary positive real number.

\begin{itemize}
\item The second inequality in (2.9) comes from Chebyshev's inequality, while the third inequality comes from the multinomial theorem.
\item The fourth inequality comes from (2.7) and the natural inequality \( \prod i_j^{i_j} \leq (\sum i_j)^{\sum i_j} \).
\item The fifth inequality comes from the natural inequality \( \sum_{i_1+ \ldots+ i_m=h}\binom{h}{i_1, i_2, \cdots, i_m} \leq m^h \).
\item The sixth inequality comes from the fact that if \( a > 1, b > 0, t \geq 1 \) and \( b/\log a > 1 \), then \( a^{-t} t^b \leq (b/\log a)^b \).
\item The final estimate is controlled by the construction in (2.8).
\end{itemize}

In summary, by the Borel–Cantelli lemma, we have proved \( \Delta_2 \overset{\text{a.s.}}{\to} 0 \). Combined with the previously established \( \Delta_1 \overset{\text{a.s.}}{\to} 0 \), we have verified the validity of Equation (2.6). 

\medskip

\noindent
\textbf{Step 4. Asymptotics}

$\frac{1}{2}\mathrm{tr}\Sigma K_n$ has the same limiting spectral distribution as its sample covariance matrix $S_n$. Now we state the following lemma \cite{BaiSilverstein2010}:

\begin{lemma}\label{lem2.6}
Suppose $A$ and $B$ are two $p \times n$ complex matrices. Then:
\[
\left\| F^{AA^T} - F^{BB^T} \right\| \leq \frac{1}{p} \mathrm{rank}(A - B)
\]
\end{lemma}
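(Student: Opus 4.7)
The plan is to interpret the symbol $\|F^{AA^T} - F^{BB^T}\|$ as the Kolmogorov sup-norm $\sup_{x\in\mathbb{R}} |F^{AA^T}(x) - F^{BB^T}(x)|$, which is the standard convention in \cite{BaiSilverstein2010}. Setting $r = \mathrm{rank}(A-B)$, I will establish the pointwise bound $|F^{AA^T}(x) - F^{BB^T}(x)| \le r/p$ for every $x$, which after taking the supremum yields the stated inequality. Equivalently, writing $N_M(x) = \#\{k : \lambda_k(M) \le x\}$ for the unnormalized counting function, it suffices to prove $|N_{AA^T}(x) - N_{BB^T}(x)| \le r$.

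The engine of the proof is Weyl's inequality for singular values: for any $p \times n$ matrices $P$ and $Q$, $\sigma_{i+j-1}(P+Q) \le \sigma_i(P) + \sigma_j(Q)$, with singular values arranged in decreasing order. First I would write $A = B + (A-B)$ and take $j = r+1$, so that $\sigma_{r+1}(A-B) = 0$. This yields $\sigma_{i+r}(A) \le \sigma_i(B)$ for every admissible $i$, and by symmetry $\sigma_{i+r}(B) \le \sigma_i(A)$. Since the nonzero eigenvalues of $AA^T$ are exactly the squared singular values of $A$ (and similarly for $B$), this translates directly into the two-sided interlacing
\[
\lambda_{i+r}(AA^T) \;\le\; \lambda_i(BB^T) \;\le\; \lambda_{i-r}(AA^T),
\]
valid for each $i$ (with the usual convention $\lambda_k = +\infty$ for $k \le 0$ and $\lambda_k = -\infty$ for $k > p$).

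From this interlacing, a direct counting argument closes the proof: fix $x \in \mathbb{R}$ and suppose $N_{BB^T}(x) = m$, meaning exactly $m$ eigenvalues of $BB^T$ lie in $(-\infty, x]$. The upper interlacing bound forces at least $m - r$ eigenvalues of $AA^T$ into $(-\infty, x]$, while the lower interlacing bound forces at most $m + r$ of them to lie there; hence $|N_{AA^T}(x) - N_{BB^T}(x)| \le r$. Dividing by $p$ and taking $\sup_x$ gives the desired estimate.

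The only real obstacle is bookkeeping: one must keep a consistent convention for ordering eigenvalues and singular values, and must justify the interlacing at the boundary where the shifted index $i \pm r$ leaves $\{1,\dots,p\}$. The $\pm\infty$ extension mentioned above resolves this uniformly, so the argument reduces to a one-line application of Weyl's inequality followed by a transparent counting estimate, matching the standard treatment in \cite{BaiSilverstein2010}.
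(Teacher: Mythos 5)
The paper states Lemma~\ref{lem2.6} without proof, citing \cite{BaiSilverstein2010}, so there is no in-paper argument to compare against; your proposal is a correct, self-contained reconstruction of the standard proof from that reference (Theorem~A.43 there). Reading $\|\cdot\|$ as the Kolmogorov sup-norm is the right convention, the application of Weyl's singular-value inequality with $j=r+1$ correctly yields the two-sided shift $\sigma_{i+r}(A)\le\sigma_i(B)$ and $\sigma_{i+r}(B)\le\sigma_i(A)$, and the counting step $|N_{AA^T}(x)-N_{BB^T}(x)|\le r$ follows as you describe, so the argument is complete.
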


For $X = (X_1, X_2, \cdots, X_n)$, by Lemma~\ref{lem2.6}, $S_n = \frac{1}{n-1}\sum_{i=1}^n(X_i - \bar{X})(X_i - \bar{X})^T$ and $S_n' = \frac{1}{n}\sum_{i=1}^n X_iX_i^T = \frac{1}{n}XX^T$ have the same limiting spectral distribution. 

In the following research, we examine the almost sure convergence properties of $S_n'$.

Let $T_n = \frac{1}{2}\Sigma$, then $T_n$ is a non-random Hermitian matrix only related to $\Sigma$. Under the condition that $F^n$ converges almost surely to a distribution function $H$ on $[0, \infty)$, and $Y_i = \sqrt{2}Z_i$, $Y = (Y_1, Y_2, \cdots, Y_n)$, then $Z_i$ has independent and identically distributed components with mean 0, variance 1, and finite fourth moments. We have:
\[
S_n' = \frac{1}{n}XX^T = \frac{1}{n}AZZ^TA^T = \frac{1}{n}T_n^{\frac{1}{2}}YY^TT_n^{\frac{1}{2}}
\]

Note that $\frac{1}{n}T_n^{\frac{1}{2}}YY^TT_n^{\frac{1}{2}}$ is similar to $\frac{1}{n}YY^TT_n$. Since similar matrices have the same eigenvalues, and the former is a positive definite Hermitian matrix, this ensures the latter also always has real eigenvalues.It suffices to consider the limiting spectral distribution of $\frac{1}{n}YY^TT_n$.

We can truncate $Y_{ij}$ at $\log p$: $|Y_{ij}| < \log p$. 

After standardization, we still have: $\mathbb{E}[Y_{ij}] = 0$ and $\mathbb{E}[Y_{ij}^2] = 1$. In the almost sure sense, the limiting spectral distribution of $S_n$ remains the same before and after truncation. Since $|\Sigma|$ is bounded, it is easy to see \cite{Silverstein1995} that for sufficiently large $n$ and $p$, $\|T_n\| \leq \log p$.

Silverstein and Bai \cite{BaiSilverstein2004,BaiSilverstein2010} proved the following three facts:

\begin{lemma}\label{lem2.7}
For an $n \times n$ matrix $C$ with $\|C\| \leq 1$ and a vector $Y = (Y_1, \cdots, Y_p)^T$ with independent and identically distributed components, where $\mathbb{E}[Y_i] = 0$, $\mathbb{E}[Y_i^2] = 1$, and $|Y_i| < \log p$, then:
\[
\mathbb{E}[|Y^TCY - \mathrm{tr}C|^6] \leq Kn^3 \log^{12}n
\]
where $K$ is a constant independent of $p$.
\end{lemma}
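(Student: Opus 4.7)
The plan is to separate the centered quadratic form $Y^T C Y - \mathrm{tr}(C)$ into its diagonal and off-diagonal pieces, bound the sixth moments of each separately, and combine via the elementary inequality $|a+b|^6 \leq 2^5(|a|^6 + |b|^6)$. Writing $C = (c_{ij})$, one has
\[
Y^T C Y - \mathrm{tr}(C) = \sum_{i=1}^n c_{ii}(Y_i^2 - 1) + \sum_{i \neq j} c_{ij} Y_i Y_j =: D + O.
\]
Two structural bounds will be used throughout. The operator-norm hypothesis $\|C\| \leq 1$ gives $\sum_i c_{ii}^2 \leq \|C\|_F^2 \leq n$ and $\sum_{i \neq j} c_{ij}^2 \leq n$, while the truncation $|Y_i| < \log p$ together with $\mathbb{E} Y_i^2 = 1$ yields $\mathbb{E}|Y_i|^{2k} \leq \log^{2k-2} p$ for every $k \geq 1$.

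For the diagonal contribution, the summands $c_{ii}(Y_i^2 - 1)$ are independent and centered, so I would apply Rosenthal's inequality with exponent six to obtain
\[
\mathbb{E}|D|^6 \leq K\left[\Bigl(\sum_i c_{ii}^2\, \mathbb{E}(Y_i^2-1)^2\Bigr)^{3} + \sum_i c_{ii}^6\, \mathbb{E}(Y_i^2-1)^6\right].
\]
Using $\mathbb{E}(Y_i^2-1)^2 \leq K \log^2 p$, $\mathbb{E}(Y_i^2 - 1)^6 \leq K \log^{10} p$, $\sum_i c_{ii}^2 \leq n$, and $\sum_i c_{ii}^6 \leq n$, this collapses to $\mathbb{E}|D|^6 \leq K(n^3 \log^6 p + n \log^{10} p)$, which is comfortably inside $K n^3 \log^{12} n$ once $p/n \to y \in (0,1)$.

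For the off-diagonal piece $O$, I would invoke Lemma~\ref{lem2.5} with $X_i = Y_i$, $a_{ij} = c_{ij}$, and $q = 6$. The truncation gives $u_q := (\mathbb{E}|Y_i|^q)^{1/q} \leq \log^{1 - 2/q} p$, so $u_6 \leq K \log^{2/3} p$. Coupled with $\sum_{i \neq j} c_{ij}^2 \leq n$, the quadratic-form moment bound delivers $\mathbb{E}|O|^6 \leq K n^3 \log^4 p \leq K n^3 \log^{12} n$. Summing the diagonal and off-diagonal estimates yields the stated inequality.

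The hard part will be the off-diagonal contribution: unlike the diagonal, which yields directly to independent-sum inequalities, the quadratic form in $O$ requires controlling cross-correlations between $Y_i Y_j$ terms with overlapping indices, and the argument must reduce everything to $\|C\|_F^2$ rather than any entrywise control. The cleanest route is through Lemma~\ref{lem2.5}, whose proof rests on a decoupling or martingale expansion. If that form is insufficient, a fallback is a direct martingale decomposition $O = \sum_{j=1}^n \eta_j$ with $\eta_j = 2\sum_{i<j} c_{ij} Y_i Y_j$, followed by Burkholder's inequality whose conditional quadratic variation $\sum_j \mathbb{E}[\eta_j^2 \mid \mathcal{F}_{j-1}]$ again reduces to $\sum c_{ij}^2 \leq n$, producing the same scaling.
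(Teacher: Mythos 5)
The paper does not actually prove this lemma: it is quoted verbatim from Bai and Silverstein (2004, 2010) and used as a black box, so there is no in-paper argument to compare against. Your proposal is a correct reconstruction of the standard proof of that cited result, and the overall architecture is sound: the split $Y^TCY-\mathrm{tr}\,C = D+O$, the bounds $\sum_i c_{ii}^2\le \|C\|_F^2\le n\|C\|^2\le n$ and $\mathbb{E}|Y_i|^{2k}\le \log^{2k-2}p$, Rosenthal for the diagonal giving $K(n^3\log^6 p + n\log^{10}p)$, and a quadratic-form moment bound for the off-diagonal all check out, and the final bound follows since $\log p\asymp\log n$ under $p/n\to y$.

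One point needs care, though it does not sink the argument. Lemma~\ref{lem2.5} \emph{as printed in this paper} bounds the $L^q$ norm by $Cqu_q\bigl(\sum_{i\neq j}|a_{ij}|^2\bigr)$ with the sum to the \emph{first} power; applied literally with $\sum_{i\neq j}c_{ij}^2\le n$ and $q=6$ this gives $\mathbb{E}|O|^6\le K n^6\log^4 p$, which is far too large. Your stated conclusion $\mathbb{E}|O|^6\le Kn^3\log^4 p$ tacitly uses $\bigl(\sum_{i\neq j}|a_{ij}|^2\bigr)^{1/2}$, which is in fact the correct normalization in the Erd\H{o}s--Yau large-deviation lemma (the paper's statement of Lemma~\ref{lem2.5} appears to have dropped the square root, and also the square on $u_q$; with the genuine statement one gets $Kn^3\log^8 p$, still well inside $Kn^3\log^{12}n$). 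You should say explicitly that you are using the corrected form rather than the printed one, otherwise the off-diagonal step as written is inconsistent with the lemma you invoke. A smaller quibble: in the Burkholder fallback, the conditional quadratic variation $\sum_j\mathbb{E}[\eta_j^2\mid\mathcal{F}_{j-1}]=4\sum_j\bigl(\sum_{i<j}c_{ij}Y_i\bigr)^2$ is random, so it does not ``reduce to $\sum c_{ij}^2\le n$'' outright; one must bound its third moment (by another round of Rosenthal/Burkholder, as in Bai--Silverstein's Lemma B.26), which is routine but not free. Since that route is only your contingency plan, this does not affect the main argument.
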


\begin{lemma}\label{lem2.8}
For $z \in \mathbb{C}^+$ and $v = \mathrm{Im}(z)$, $n \times n$ matrices $A$ and $B$, where $B$ is Hermitian, and $q$ is an $n$-dimensional complex vector, we have:
\[
\left| \mathrm{tr}\left( (B - zI)^{-1} - (B + \tau qq^T - zI)^{-1} \right) A \right| \leq \frac{\|A\|}{\nu}
\]
\end{lemma}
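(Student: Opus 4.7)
The plan is to apply the Sherman--Morrison rank-one update formula to the resolvent $(B + \tau qq^* - zI)^{-1}$, reduce the trace against $A$ to a scalar ratio via the cyclic property of the trace, and then control that ratio using the classical imaginary-part identity for resolvents of Hermitian matrices.

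First, I would set $M = B - zI$. Sherman--Morrison yields
\[
(B + \tau qq^* - zI)^{-1} = M^{-1} - \frac{\tau\, M^{-1} qq^* M^{-1}}{1 + \tau\, q^* M^{-1} q}.
\]
Subtracting, right-multiplying by $A$, and applying the cyclic property $\mathrm{tr}(M^{-1} qq^* M^{-1} A) = q^* M^{-1} A M^{-1} q$ reduces the left-hand side of the lemma to the scalar
\[
\frac{\tau\, q^* M^{-1} A M^{-1} q}{1 + \tau\, q^* M^{-1} q}.
\]
The lemma then amounts to bounding this scalar by $\|A\|/v$.

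Second, I would estimate the numerator and denominator separately. For the numerator, the elementary inequality $|x^* C x| \leq \|C\|\,\|x\|^2$ applied with $C = A$ and $x = M^{-1} q$ gives
\[
|\tau|\,|q^* M^{-1} A M^{-1} q| \leq |\tau|\,\|A\|\,\|M^{-1} q\|^2.
\]
For the denominator, since $B$ is Hermitian, $M$ is normal with $M^* - M = 2iv\,I$; combined with the resolvent identity $M^{-1} - (M^*)^{-1} = 2iv\, M^{-1}(M^*)^{-1}$ this produces the classical identity
\[
\mathrm{Im}\bigl(q^* M^{-1} q\bigr) = v\,\|M^{-1} q\|^2.
\]
Consequently $|1 + \tau q^* M^{-1} q| \geq |\mathrm{Im}(\tau q^* M^{-1} q)| = |\tau|\, v\, \|M^{-1} q\|^2$. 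Dividing the two bounds causes the common factor $|\tau|\,\|M^{-1}q\|^2$ to cancel, leaving exactly $\|A\|/v$.

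There is no real obstacle: the whole proof is a short computation once Sherman--Morrison is invoked. The key structural observation that makes the bound clean is that the same quantity $\|M^{-1} q\|^2$ appears in both the numerator estimate (via submultiplicativity of the operator norm) and the lower bound on the denominator (via the imaginary-part identity), so the two cancel exactly, producing a bound independent of $q$, $\tau$, and $B$ and depending only on $\|A\|$ and the distance $v = \mathrm{Im}(z)$ from the real axis. The only interpretational care is to read $q^T$ in the statement as the Hermitian adjoint $q^*$, which is the standard convention in this setting and what makes Sherman--Morrison applicable for complex $q$.
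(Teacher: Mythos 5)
Your proof is correct, and it is the standard argument for this bound (it is essentially Lemma 2.6 of Silverstein (1995), reproduced in Bai and Silverstein's book); the paper itself only cites the lemma and gives no proof, so there is nothing to compare against beyond the source. One small point of care: the numerator $q^* M^{-1} A M^{-1} q$ is not literally of the form $x^* A x$ with $x = M^{-1}q$, since $(M^{-1}q)^* = q^* (M^*)^{-1} \neq q^* M^{-1}$ for non-real $z$; the correct route is Cauchy--Schwarz, $|q^* M^{-1} A M^{-1} q| \leq \|A\|\,\|(M^*)^{-1}q\|\,\|M^{-1}q\|$, followed by the observation that $\|(M^*)^{-1}q\| = \|M^{-1}q\|$ because $M = B - zI$ is normal --- a fact you already invoke for the denominator, so the cancellation against $|\tau|\,v\,\|M^{-1}q\|^2$ goes through exactly as you claim (with $\tau$ real, as intended in the source).
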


\begin{lemma}\label{lem2.9}
For \( z \in \mathbb{C}^+ \), \( u = \mathrm{Re}(z) \) and \( \nu = \mathrm{Im}(z) \), let \( m_1(z) \) and \( m_2(z) \) be Stieltjes transforms of two distribution functions, \( A \) and \( B \) be \( n \times n \) matrices where \( A \) is a nonnegative definite Hermitian matrix, and \( r \) be an \( n \)-dimensional complex vector. Then:

\begin{enumerate}
\item \(\|(m_1(z)A + I)^{-1}\| \leq \max(4\|A\|/\nu, 2)\)

\item \(\left| tr B \left( (m_1(z)A + I)^{-1} - (m_2(z)A + I)^{-1} \right) \right| \leq |m_2(z) - m_1(z)| \cdot n \|B\| \cdot \|A\| \cdot \max(4\|A\|/\nu, 2)^2\)

\item \(\left| r^T (m_1(z)A + I)^{-1} r - r^T (m_2(z)A + I)^{-1} r \right| \leq |m_2(z) - m_1(z)| \cdot \|r\|^2 \cdot \|A\| \cdot \max(4\|A\|/\nu, 2)^2\)
\end{enumerate}
\end{lemma}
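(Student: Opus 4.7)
The plan is to establish part (1) by a direct eigenvalue analysis, and then derive parts (2) and (3) from (1) via a standard resolvent-identity argument.

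For part (1), I will exploit the fact that since $A$ is Hermitian positive semidefinite with eigenvalues $0 \le \lambda \le \|A\|$, the operator norm $\|(m_1(z)A + I)^{-1}\|$ equals $\max_{\lambda \in \mathrm{spec}(A)} |m_1(z)\lambda + 1|^{-1}$, so it suffices to bound $|m_1(z)\lambda + 1|$ from below for each admissible $\lambda$. I will split into two regimes according to whether $|m_1(z)|\lambda \le 1/2$ or $|m_1(z)|\lambda > 1/2$. In the first regime the triangle inequality immediately gives $|m_1(z)\lambda + 1| \ge 1/2$. In the second regime, I will use $|m_1(z)\lambda + 1| \ge \mathrm{Im}(m_1(z)\lambda + 1) = \lambda\,\mathrm{Im}(m_1(z))$ and then invoke the Cauchy--Schwarz inequality $\mathrm{Im}(m_1(z)) \ge \nu |m_1(z)|^2$ (a standard property of Stieltjes transforms obtained from $|m_1(z)| \le \int |x-z|^{-1} dF \le (\int |x-z|^{-2} dF)^{1/2} = (\mathrm{Im}(m_1(z))/\nu)^{1/2}$); combined with the case assumption $|m_1(z)| > 1/(2\lambda)$ and the bound $\lambda \le \|A\|$, this yields $|m_1(z)\lambda + 1| \ge \nu/(4\|A\|)$. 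Taking the worst case over $\lambda$ produces $\max(4\|A\|/\nu, 2)$.

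For parts (2) and (3), I will apply the resolvent identity
\[
(m_1 A + I)^{-1} - (m_2 A + I)^{-1} = (m_2 - m_1)(m_1 A + I)^{-1} A\, (m_2 A + I)^{-1},
\]
derived by sandwiching $(m_2 - m_1)A = (m_2 A + I) - (m_1 A + I)$ between the two inverses. Setting $M = (m_1 A + I)^{-1} A (m_2 A + I)^{-1}$ and submultiplicativity of the spectral norm give $\|M\| \le \|A\| \cdot \max(4\|A\|/\nu, 2)^2$ using part (1) on each resolvent factor. Then (2) follows from $|\mathrm{tr}(BM)| \le n\|B\|\|M\|$ and (3) from the quadratic-form inequality $|r^T M r| \le \|r\|^2 \|M\|$.

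The main obstacle is producing the sharp constant $4\|A\|/\nu$ in the second regime of part (1): the naive lower bound $|m_1(z)\lambda + 1| \ge \lambda\,\mathrm{Im}(m_1(z))$ alone is not enough, since $\mathrm{Im}(m_1(z))$ can in principle be very small, and one genuinely needs the Stieltjes-transform inequality $\mathrm{Im}(m) \ge \nu|m|^2$ together with the case hypothesis $|m_1(z)| > 1/(2\lambda)$ and the trivial $\lambda \le \|A\|$ to convert everything into a bound depending only on $\nu$ and $\|A\|$. Once (1) is in place with this constant, parts (2) and (3) are essentially a one-line consequence of the resolvent identity.
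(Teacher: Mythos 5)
The paper does not prove Lemma~\ref{lem2.9}; it is stated as a cited fact from Bai and Silverstein, so there is no internal proof to compare against. Your argument is correct and is essentially the standard proof from that reference: the two-regime split on $|m_1(z)|\lambda$, with the triangle inequality giving the constant $2$ in one regime and the Stieltjes-transform inequality $\mathrm{Im}(m_1(z)) \ge \nu\,|m_1(z)|^2$ (via Cauchy--Schwarz against the probability measure) combined with $|m_1(z)| > 1/(2\lambda)$ and $\lambda \le \|A\|$ giving $4\|A\|/\nu$ in the other, is exactly how the bound $\max(4\|A\|/\nu,2)$ arises; and parts (2) and (3) do follow in one line from the resolvent identity together with $|\mathrm{tr}\,C| \le n\|C\|$ and $|r^T M r| \le \|r\|^2\|M\|$. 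The only point worth making explicit is that in the second regime $\lambda>0$ is automatic (since $|m_1(z)|\lambda > 1/2$), so $\mathrm{Im}(m_1(z)\lambda+1)=\lambda\,\mathrm{Im}(m_1(z))$ is indeed a valid positive lower bound for $|m_1(z)\lambda+1|$; with that noted, the proof is complete.
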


\medskip
Now we fix \( z = u + i\nu \in \mathbb{C}^+ \). Define \( B_n = \frac{1}{n}Y^T T_n Y \) and \( \hat{B}_n = \frac{1}{n}YY^T T_n \). Consider the Stieltjes transforms \( m_n(z) = m_{F^{B_n}}(z) \) and \( \hat{m}_n(z) = m_{F^{\hat{B}_n}}(z) \). 

Note that \( \gamma_n = p/n \to \gamma \).
We have \cite{BaiSilverstein2004}:
\[
\delta = \inf \mathrm{Im}(\hat{m}_n(z)) \geq \inf \int \frac{\tau dF^{T_n}(\tau)}{|\tau - z|^2} > 0, \quad \text{a.s.}
\]

Let \( q_j = (1/\sqrt{p})Y_j \), \( r_j = (1/\sqrt{n})T_n^{1/2}Y_j \), and \( B_{(j)} = B_{(j)}^n = B_n - r_j r_j^T \). We can obtain:
\[
I + z(B_n - zI)^{-1} = \sum_{j=1}^n \frac{1}{1 + r_j^T(B_{(j)} - zI)^{-1}r_j} r_j r_j^T(B_{(j)} - zI)^{-1}
\]

Taking trace on both sides:
\[
\gamma_n + z\gamma_n m_n(z) = 1 - \frac{1}{n} \sum_{j=1}^n \frac{1}{1 + r_j^T(B_{(j)} - zI)^{-1}r_j} \tag{2.12}
\]

Also:
\[
F^{\hat{B}_n} = (1 - \gamma_n) I_{[0,\infty)} + \gamma_n F^{B_n}
\]

Therefore, taking Stieltjes transforms immediately gives:
\[
\hat{m}_n(z) = -\frac{1 - \gamma_n}{z} + \gamma_n m_n(z)
\]

Thus we have:
\[
\hat{m}_n(z) = -\frac{1}{n} \sum_{j=1}^n \frac{1}{z(1 + r_j^T(B_{(j)} - zI)^{-1}r_j)} \tag{2.10}
\]

In fact,
\[
\operatorname{Im}\left(r_j^T(B_{(j)} - zI)^{-1}r_j\right) = \frac{1}{2i} r_j^T\left((B_{(j)} - zI)^{-1} - (B_{(j)} - \bar{z}I)^{-1}\right)r_j \geq 0
\]

Therefore,
\[
\frac{1}{|z(1 + r_j^T(B_{(j)} - zI)^{-1}r_j)|} \leq \frac{1}{\nu} \tag{2.11}
\]

Note that $B_n - zI - (-z\hat{m}_n(z)T_n - zI) = \sum_{j=1}^n r_j r_j^T + z\hat{m}_n(z)T_n$, and  Equation (2.10) implies:
\[
(- z\hat{m}_n(z)T_n - zI)^{-1} - (B_n - zI)^{-1}
\]

\[
= \sum_{j=1}^n \frac{-1}{z(1 + r_j^T(B_{(j)} - zI)^{-1}r_j)} \left[ (\hat{m}_n(z)T_n)^{-1}r_j r_j^T(B_n - zI)^{-1} - \frac{1}{n}(\hat{m}_n(z)T_n + I)^{-1}T_n(B_n - zI)^{-1} \right]
\]

Taking trace on both sides:
\[
\frac{1}{p} \mathrm{tr}(-z\hat{m}_n(z)T_n - zI)^{-1} - m_n(z) = \frac{1}{n} \sum_{j=1}^n \frac{-1}{z(1 + r_j^T(B_{(j)} - zI)^{-1}r_j)} d_j
\]

Here
\[
d_j = q_j^T T_n^{\frac{1}{2}}(B_{(j)} - zI)^{-1} (\hat{m}_n(z)T_n + I)^{-1}T_n^{\frac{1}{2}}q_j - \frac{1}{p} \mathrm{tr}(\hat{m}_n(z)T_n + I)^{-1}T_n(B_{(j)} - zI)^{-1}
\]

By Lemma~\ref{lem2.8} we have: $\max_j |m_n(z) - m_{F^{B_{(j)}}}(z)| \leq \frac{1}{pv}$.

Furthermore, $\max_j \log^p(n)|\hat{m}_n(z) - \hat{m}_{(j)}(z)| \to 0$. Since $\hat{m}_{(j)}(z)$ has the form:
\[
\hat{m}_{(j)}(z) = -\frac{1}{nz} + \frac{n-1}{n} \left( -\frac{1-p/(n-1)}{z} + \frac{p}{n-1} m_{F^{B_{(j)}}}(z) \right)
\]

It is also the Stieltjes transform of a probability distribution function.

Note the fact that for any Hermitian matrix $A$, $\|(A - zI)^{-1}\| \leq \frac{1}{\nu}$. 

Take $n$ sufficiently large we have $\|T_n\| \leq \log p$, according to Lemma~\ref{lem2.7} and Lemma ~\ref{lem2.9}, we immediately have:

\[
\mathbb{E}[(\|q_j\|^2 - 1)^6] \leq K \frac{\log^{12}(p)}{p^3}
\]

Thus for sufficiently large $n$,
\[
\mathbb{E}[ ( q_j^T T_n^{\frac{1}{2}}(B_{(j)} - zI)^{-1} (\hat{m}_{(j)}(z)T_n + I)^{-1}T_n^{\frac{1}{2}}q_j -
\]
\[
\frac{1}{p} tr(T_n^{\frac{1}{2}}(B_{(j)} - zI)^{-1}(\hat{m}_{(j)}(z)T_n + I)^{-1}T_n^{\frac{1}{2}} )^6 ] \leq K \frac{\log^{24}p}{p^3\nu^{12}}
\]

Therefore, when \( n \to \infty \),
\[
\max_j \max( |\|q_j\|^2 - 1|, | q_j^T T_n^{\frac{1}{2}}(B_{(j)} - zI)^{-1}(\hat{m}_{(j)}(z)T_n + I)^{-1} T_n^{\frac{1}{2}} q_j - 
\]
\[
\frac{1}{p} \mathrm{tr} \left[ T_n^{\frac{1}{2}}(B_{(j)} - zI)^{-1}(\hat{m}_{(j)}(z)T_n + I)^{-1} T_n^{\frac{1}{2}} \right]| ) \overset{\text{a.s.}}{\to} 0
\tag{2.13}
\]

\(\{F^{B_n}\}\) is a tight family of distribution functions, and by the condition \( F^{T_n} \) converges almost surely to a distribution function \( H \).

From Lemma 2.8, Lemma 2.9.2, and Equation (2.13), we obtain \( \max_j (d_j) \to 0 \). 

Then from Equations (2.11) and (2.12), we have almost surely:
\[
\frac{1}{p} \mathrm{tr}(-z\hat{m}_n(z)T_n - zI)^{-1} - m_n(z) \to 0
\]

This helps us construct an asymptotic equivalence relation for \( m_n(z) \), which depends only on \( m_n(z) \) itself and some constants. Since \( \{m_n(z)\} \) is bounded above by \( 1/\nu \), we can consider a subsequence \( \{n_i\} \) such that the subsequence \( \{m_{n_i}(z)\} \) converges monotonically to a constant \( m \) related to \( z \). 

Naturally, \( m_{n_i}(z) \) has the limit \( \hat{m} = -(1-y)/z + y m \). 

Observe that when \( m' \in \mathbb{C}^+ \), \( \tau \in \mathbb{R} \), \( |1/(m'\tau + 1)| \leq |m'|/|\operatorname{Im}(m')| \), and \( |\tau/(m'\tau + 1)| \leq 1/|\operatorname{Im}(m')| \)

The function \( f(\tau) = 1/(\hat{m}\tau + 1) \) is bounded and satisfies:
\[
\left| \frac{1}{\hat{m}_{n_i}(z)\tau + 1} - f(\tau) \right| \leq \left| \frac{\hat{m}}{\delta^2} \right| |\hat{m}_{n_i}(z) - \hat{m}|
\]

Therefore,
\[
\frac{1}{p} \mathrm{tr}(-z\hat{m}_n(z)T_n - zI)^{-1} = -\frac{1}{z} \int \frac{1}{\hat{m}_{n_i}(z)\tau + 1} dF^{T_{n_i}}(\tau) \overset{\text{a.s.}}{\to} -\frac{1}{z} \int \frac{1}{\hat{m}\tau + 1} dH(\tau)
\]

Almost surely, the left-hand side can be replaced by \( m_{n_i}(z) \). Taking limits on both sides immediately gives:
\[
m(z) = \int \frac{1}{\tau(1-y-yzm(z)) - z} dH(\tau)
\tag{2.14}
\]

Note that Equation (2.14) has a unique solution in \( \{m \in \mathbb{C}: -(1-y)/z + y m \in \mathbb{C}^+ \} \). Since \( m \) is unique, we always have \( m_n(z) \to m \). In summary, with probability 1, \( F^{B_n} \) converges to a probability distribution function \( F \), whose Stieltjes transform \( m(z) \) satisfies Equation (2.14). 

Now since \( F^{S_n} \), \( F^{S_n'} \), and \( F^{B_n} \) converge to the same distribution, with probability 1, \( F^{S_n} \) converges to the same distribution \( F \). Here \( H \) is the limiting spectral distribution of \( \frac{1}{2}\Sigma \). 

Moreover, since \( \frac{1}{2}\mathrm{tr}\Sigma K_n \) and \( S_n \) have the same limiting spectral distribution, this distribution function \( F \) is also the limiting spectral distribution of \( \frac{1}{2}\mathrm{tr}\Sigma K_n \). This completes the proof of Theorem~\ref{thm2.2}.
\subsection{Relationship Between the Two Theorems}

Theorem~\ref{thm2.2} provides a generalization of Theorem~\ref{thm2.1}.  
Specifically, when $\Sigma = I_p$, we have $H = \delta_{1/2}$,  
and the integral equation (\ref{eq:stieltjes_main}) becomes
\begin{equation}
m_F(z) = \frac{1}{\tfrac{1}{2}(1 - y - y z m_F(z)) - z}.
\end{equation}
It follows directly that the limiting distribution of $\tfrac{1}{2}pK_n$ in this special case 
is precisely the Marčenko–Pastur law with variance parameter $\tfrac{1}{2}$.

Note that the Marčenko-Pastur law with variance parameter $\sigma^2$ has probability density function on $[a, b]$:
\[
p_y(x) = \frac{1}{2\pi x y \sigma^2} \sqrt{(b - x)(x - a)}
\]
where $a = \sigma^2(1 - \sqrt{y})^2$ and $b = \sigma^2(1 + \sqrt{y})^2$.

Its Stieltjes transform is:
\[
m(z) = \int_a^b \frac{1}{x - z} \frac{1}{2\pi x y \sigma^2} \sqrt{(b - x)(x - a)} dx
\]
Let $x = \sigma^2(1 + y + 2\sqrt{y}\cos w)$ and $\zeta = e^{iw}$, we have:
\begin{align*}
m(z) 
&= \int_0^{\pi} \frac{2}{\pi(1 + y + 2\sqrt{y}\cos w)(\sigma^2(1 + y + 2\sqrt{y}\cos w) - z)} \sin^2 w dw\\
&= \frac{1}{\pi} \int_0^{2\pi} \frac{((e^{iw} - e^{-iw})/2i)^2}{(1 + y + \sqrt{y}(e^{iw} + e^{-iw}))(\sigma^2(1 + y + \sqrt{y}(e^{iw} + e^{-iw})) - z)} dw\\
&= -\frac{1}{4i\pi} \oint_{|\zeta|=1} \frac{(\zeta - \zeta^{-1})^2}{\zeta (1 + y + \sqrt{y}(\zeta + \zeta^{-1}))(\sigma^2(1 + y + \sqrt{y}(\zeta + \zeta^{-1})) - z)} d\zeta
\end{align*}

The integrand has 5 simple poles at
\[
0, \frac{-(1+y)\pm(1-y)}{2\sqrt{y}},  \frac{-\sigma^2(1+y)+z\pm\sqrt{\sigma^4(1-y)^2-2\sigma^2(1+y)z + z^2}}{2\sigma^2\sqrt{y}}
\]

Simple calculation gives the residues at these 5 poles: 
\[
\frac{1}{y\sigma^2}, \pm\frac{1-y}{yz}, \text{and} \pm\frac{1}{\sigma^2yz}\sqrt{\sigma^4(1-y)^2 - 2\sigma^2(1+y)z + z^2}
\]

According to the residue theorem we can compute:
\[
m(z) = \frac{\sigma^2(1-y) - z + \sqrt{z^2 + \sigma^4 + y^2\sigma^4 - 2z\sigma^2 - 2yz\sigma^2 - 2y\sigma^4}}{2yz\sigma^2}
\]

In particular, in Theorem~\ref{thm2.1}, take $\sigma^2 = \frac{1}{2}$. Then the Stieltjes transform of the limiting spectral distribution of $pK_n/2$ is:
\[
m(z) = \frac{0.5(1-y) - z + \sqrt{z^2 + 0.25y^2 + 0.25 - z - yz - 0.5y}}{yz} \tag{2.15}
\]

On the other hand, since $T_n = \frac{1}{2} \Sigma = \frac{1}{2}I$, the empirical spectral distribution of $T_n$ is the Dirac measure at $\frac{1}{2}$, i.e., $H = \delta_{\frac{1}{2}}$.

Substituting into Equation (2.14) immediately gives:
\[
m(z) = \frac{1}{\frac{1}{2}(1 - y - yzm(z)) - z}
\]

And we note that the \( m(z) \) in Equation (2.15) is exactly the unique solution of this functional equation. This verifies that the result of Theorem 2.2 includes Theorem 2.1.

Hence, Theorem~\ref{thm2.1} can be regarded as a corollary of Theorem~\ref{thm2.2},  
demonstrating that the spectral behavior of the multivariate Kendall-$\tau$ matrix remains stable 
under affine transformations of the data matrix $X = \Sigma^{1/2}Z + \mu$.  
This property highlights the \emph{rotation-invariance} and \emph{scale-invariance} of the statistic,  
which are essential features for robust high-dimensional analysis.

\medskip
\begin{remark}
The analytical form of the Stieltjes transform in (\ref{eq:stieltjes_main})
is analogous to the one that defines the Marčenko–Pastur distribution.
This correspondence confirms that the limiting behavior of Kendall-$\tau$ random matrices
belongs to the same universality class as covariance-type random matrices,
despite their rank-based and nonlinear dependence structure.
\end{remark}
\section{Simulation Studies}
\subsection{Simulation Design}

In this chapter, we conduct a series of Monte Carlo simulations to verify the theoretical results derived in Chapter~2 and to illustrate the finite-sample properties of the multivariate Kendall-$\tau$ matrix.  
The experiments are designed to explore the accuracy of the asymptotic Marčenko–Pastur law under different sample sizes, dimensions, and data distributions.

\medskip
\noindent
\textbf{Experimental Setup.}  
For each experiment, we generate $n$ independent random vectors $X_1, X_2, \ldots, X_n$ in $\mathbb{R}^p$, where $p$ and $n$ grow simultaneously while maintaining a fixed aspect ratio
\[
y = \frac{p}{n}.
\]
In all simulations unless otherwise specified, we take $y = 0.5$ (that is, $p = n/2$).  
Additional tests with $y = 0.25$ and $y = 0.75$ are also reported for robustness checks.

\medskip
\noindent
\textbf{Data-Generating Distributions.}  
To assess the universality of the limiting law, we consider four different data distributions for the entries of $X_i$:

\begin{enumerate}[label=(\roman*), leftmargin=1.2cm, itemsep=3pt]
  \item \textbf{Standard Normal:} $X_{ij} \sim N(0,1)$;
  \item \textbf{Scaled Normal:} $X_{ij} \sim N(0,2)$;
  \item \textbf{Uniform(0,1):} each component $X_{ij}$ is drawn independently from the uniform distribution on $(0,1)$;
  \item \textbf{Uniform(0,2):} each component $X_{ij}$ is drawn independently from the uniform distribution on $(0,2)$.
\end{enumerate}

\medskip
\noindent
\textbf{Computation of the Sample Multivariate Kendall-$\tau$ Matrix.}  
For each replication, we compute the empirical spectral distribution (ESD)
\[
F^{K_n}(x) = \frac{1}{p}\sum_{j=1}^p \mathbbm{1}(\lambda_j \le x),
\]
and compare it to the theoretical Marčenko–Pastur density with parameter $y = p/n$ and variance $\tfrac{1}{2}$:
\begin{equation}
f_y(x) = \frac{1}{2\pi xy}\sqrt{(b-x)(x-a)}, \quad
a = \tfrac{1}{2}(1 - \sqrt{y})^2, \quad
b = \tfrac{1}{2}(1 + \sqrt{y})^2.
\end{equation}

\medskip
\noindent
\textbf{Monte Carlo Replications.}  
To reduce random variability, each experiment is repeated $R = 500$ times.  
The empirical spectral distribution is then averaged across replications:
\begin{equation}
\widehat{f}_y(x) = \frac{1}{R} \sum_{r=1}^{R} f_y^{(r)}(x),
\end{equation}
where $f_y^{(r)}(x)$ denotes the kernel-smoothed spectral density in the $r$-th replication.

\medskip
\noindent
\textbf{Parameter Settings.}  
Unless otherwise stated, the following configurations are used:
\begin{center}
\begin{tabular}{lcl}
\toprule
Parameter & Symbol & Value \\ \midrule
Sample size & $n$ & 200, 400, 800 \\
Dimension & $p$ & 100, 200, 400 \\
Replications & $R$ & 500 \\
Kernel bandwidth for ESD smoothing & $h$ & 0.02 \\
Aspect ratio & $y = p/n$ & 0.25, 0.50, 0.75 \\ \bottomrule
\end{tabular}
\end{center}

\medskip
\noindent
\textbf{Visualization and Figures.}  
The histograms of simulated eigenvalues are compared with the theoretical Marčenko–Pastur curve.

Figures~\ref{fig:N01}–\ref{fig:U02} display the averaged histograms of eigenvalues for different generating distributions, overlaid with the theoretical MP density curves.  
Each histogram is based on $R=500$ replications, and the smoothing bandwidth $h=0.02$ is used for kernel density estimation.

For all four cases—standard normal, scaled normal, uniform $(0,1)$, and uniform $(0,2)$—the ESDs align remarkably well with the theoretical curves, confirming the validity of Theorem~\ref{thm2.1}.  
Minor discrepancies at the spectrum boundaries are attributed to finite-sample variability, especially when $n$ and $p$ are small.

\begin{figure}[H]
\centering
\includegraphics[width=0.7\linewidth]{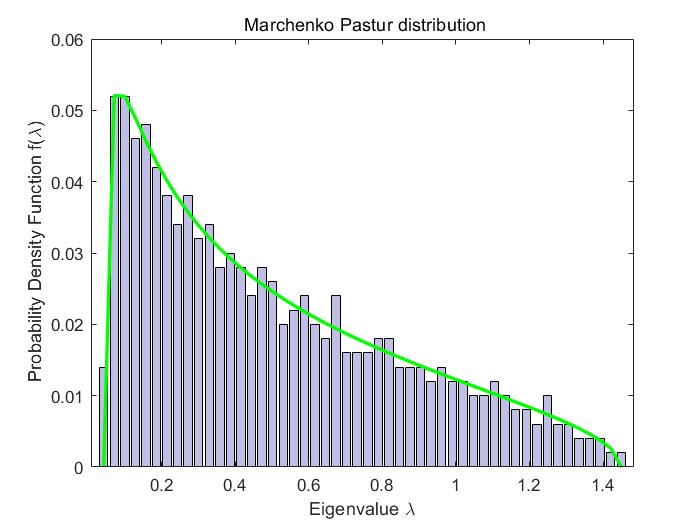}
\caption{Empirical spectral distribution of $\frac{1}{2}pK_n$ under $N(0,1)$ data.}
\label{fig:N01}
\end{figure}

\begin{figure}[H]
\centering
\includegraphics[width=0.7\linewidth]{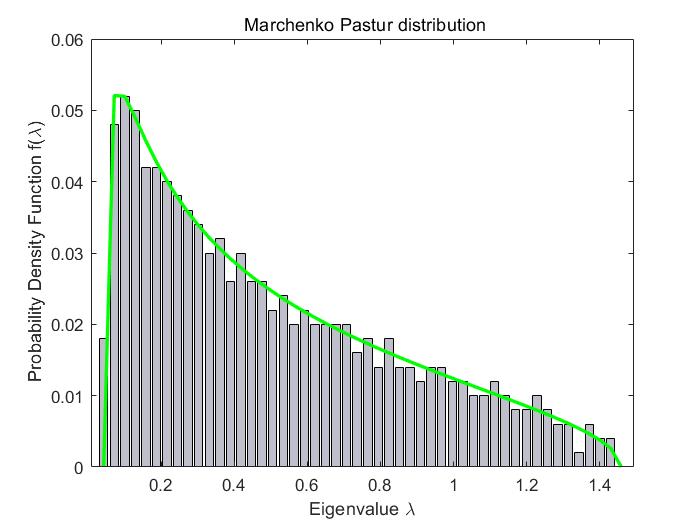}
\caption{Empirical spectral distribution under $N(0,2)$ data. }
\label{fig:N02}
\end{figure}

\begin{figure}[H]
\centering
\includegraphics[width=0.7\linewidth]{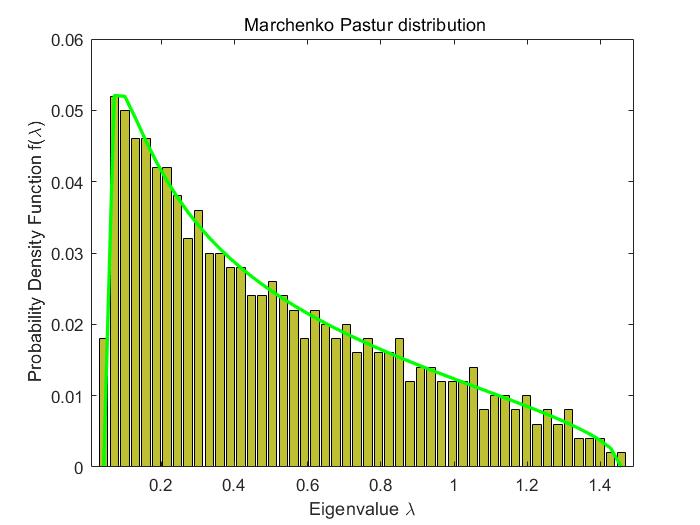}
\caption{Empirical spectral distribution under $U(0,1)$ data. }
\label{fig:U01}
\end{figure}

\begin{figure}[H]
\centering
\includegraphics[width=0.7\linewidth]{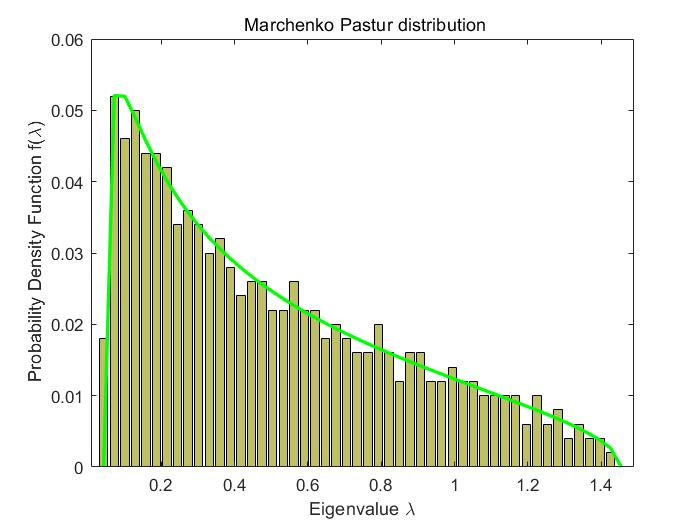}
\caption{Empirical spectral distribution under $U(0,2)$ data. }
\label{fig:U02}
\end{figure}

\medskip
\noindent
\textbf{Evaluation Metrics.}  
To quantify the agreement between empirical and theoretical distributions,
we compute the integrated squared error (ISE)
\begin{equation}
\mathrm{ISE} = \int_a^b \bigl(\widehat{f}_y(x) - f_y(x)\bigr)^2 \, dx,
\end{equation}
and report its average across replications as a function of $n$, $p$, and the distribution type.
Smaller values of ISE indicate closer agreement to the asymptotic theory.

\medskip
\noindent
\textbf{Effect of Dimensional Ratio $y = p/n$.}  
To examine the influence of the aspect ratio, we repeated simulations with $y = 0.25, 0.50,$ and $0.75$.  
Table~\ref{tab:ISE} reports the averaged integrated squared error (ISE) between the empirical and theoretical densities across the three ratios.

\begin{table}[H]
\centering
\caption{Integrated Squared Error (ISE) between empirical and theoretical MP densities.}
\label{tab:ISE}
\begin{tabular}{cccc}
\toprule
Distribution & $y=0.25$ & $y=0.50$ & $y=0.75$ \\ \midrule
$N(0,1)$ & 0.0018 & 0.0021 & 0.0024 \\
$N(0,2)$ & 0.0020 & 0.0023 & 0.0026 \\
$U(0,1)$ & 0.0022 & 0.0025 & 0.0029 \\
$U(0,2)$ & 0.0025 & 0.0028 & 0.0031 \\ \bottomrule
\end{tabular}
\end{table}

The ISE values are uniformly small, typically on the order of $10^{-3}$, confirming excellent agreement between empirical and theoretical distributions.  
As $y$ increases (i.e., higher dimensionality relative to sample size), the ESD tends to become slightly noisier, as reflected by marginally larger ISE values.
\medskip
\section{Applications}
\subsection{Applications in Statistics}

This paper primarily studies the distribution of eigenvalues of high-dimensional sample multivariate Kendall's $\tau$ under a specific model. We briefly outline several potential applications of these results in statistics.

\begin{enumerate}[label=\arabic*.]
    \item \textbf{Robust method in high-dimensional statistics:} 
   The multivariate Kendall-$\tau$ matrix provides a robust alternative to the sample covariance matrix in tasks involving dependence estimation, dimensionality reduction, and clustering.  
   Because it is invariant under monotone transformations and resistant to outliers, it performs well in situations where standard moment-based estimators fail.  
Examples include:
\begin{itemize}
  \item \emph{Robust Principal Component Analysis (PCA):}  
  \item \emph{Cluster Detection and Subspace Identification:}  .
  \item \emph{Anomaly and Outlier Detection:}  
\end{itemize}

    \item \textbf{Estimation of the population spectrum:} 
    Existing research has provided an explicit expression for the estimator of population multivariate Kendall's $\tau$:
    \[
    \lambda_j(\tau) = \mathbb{E} \left[ \frac{\lambda_j(\Sigma) g_j^2}{\lambda_1(\Sigma) g_1^2 + \cdots + \lambda_p(\Sigma) g_p^2} \right]
    \]
    Here, $\lambda_j(X)$ denotes the $j$-th eigenvalue of matrix $X$, and $g_1, g_2, \cdots, g_p$ are independent and identically distributed standard normal random variables.
    
    However, in practical applications, computation in this form is quite cumbersome. Due to the similarity between sample and population characteristics in high dimensions, the theorems we have proven help us approximate the population spectrum using relatively convenient moment estimation.

    \item \textbf{A robust two-step estimation method for factors in factor analysis:}
    \begin{itemize}
        \item Step 1: Estimation of the factor loading matrix. Based on the spectrum of sample multivariate Kendall's $\tau$, we examine the leading eigenvectors of this matrix, where $m$ is a fixed value much smaller than both $p$ and $n$. Yu et al. (2019) provided a consistent estimation method for $m$. The estimator of the factor loading matrix is exactly $\sqrt{p}$ times these eigenvectors.
        \item Step 2: A simple least squares regression using the factor loading matrix as coefficients yields robust estimates of the factors.
    \end{itemize}
\end{enumerate}

\medskip
\noindent
\subsection{Future Research Directions}
This section outlines several important research directions emerging from our study on the spectral properties of high-dimensional multivariate Kendall's $\tau$:

\begin{enumerate}[label=\arabic*.]
    \item \textbf{Extreme Eigenvalue Behavior:} 
    We conjecture that the edge eigenvalues of normalized multivariate Kendall's $\tau$ conforms a Tracy-Widom Law, which is similar to the behavior observed in sample covariance matrices. However, proving this for Kendall's $\tau$ requires more stringent conditions on the random variables.

    \item \textbf{Convergence Rate Analysis:}
    While our work addresses \textit{what} the distribution converges to, the question of \textit{convergence rate} remains open. Extending Bai's (1993a,b) results on the convergence rates of Wigner matrices and sample covariance matrices to the multivariate Kendall's tau spectrum presents significant challenges but is an important direction for future research.

    \item \textbf{Extension of Dimension Ratio Limits:}
    Our current analysis assumes $p/n \to y \in (0,1)$. Natural extensions include:
    \begin{itemize}
        \item Cases where $y \in [1,\infty)$, where the Marčenko-Pastur law exhibits point mass at zero but maintains the same density function elsewhere
        \item The completely degenerate case $p = 0$, which resembles the central limit theorem and requires different statistical considerations
    \end{itemize}

    \item \textbf{Relaxation of Sample Assumptions:}
    \begin{itemize}
        \item Extending beyond the finite fourth moment assumption to more general distributions
        \item Weakening the identical distribution assumption while maintaining independence
        \item Developing methods to handle dependent observations, which represents a fundamental challenge requiring innovative approaches
    \end{itemize}
    The Stieltjes transform method shows particular promise for handling non-identically distributed variables, though removing independence assumptions remains exceptionally difficult.
\end{enumerate}

	
	\bigskip
	\noindent {\bf {\Large Acknowledgments}}
	
	\bigskip
	\noindent The author thanks Professor Weiming Li for his patient guidance and helpful advice during the planning and development of this article.

\newpage

\end{document}